\documentclass[11pt]{amsart}
\usepackage{mathrsfs}
\usepackage{threeparttable}
\usepackage{amsfonts}
\usepackage{amsmath}
\usepackage{amssymb}
\usepackage{enumerate}
\usepackage{hyperref}
\usepackage{latexsym}
\usepackage{color}
\usepackage{enumitem}
\usepackage{enumerate}
\usepackage{makecell}
\usepackage{tikz}
\usepackage{changes}
\usepackage{listings}

\def\a{\alpha}
\def\b{\beta}
\def\S{\mathrm{S}} 
\def\l{\langle} \def\r{\rangle}

\def\Ga{\Gamma}

\def\qed{\hfill $\Box$}

 \newcommand\D{\mathrm{D}}
\newcommand\A{\mathrm{A}} \newcommand\Sy{\mathrm{S}}
\newcommand\Aut{\mathrm{Aut}}  
\newcommand\Cay{\mathrm{Cay}}  \newcommand\Cos{\mathrm{Cos}} \newcommand\K{\mathsf{K}}

\def\mz{{\mathbb Z}}

\newcommand\GL{\mathrm{GL}} \newcommand\SL{\mathrm{SL}}  \newcommand\PGL{\mathrm{PGL}}    \newcommand\PSL{\mathrm{PSL}}

\newcommand\PGammaL{\mathrm{P\Gamma L}}

\newcommand\M{\mathrm{M}}

\newtheorem{theorem}{Theorem}[section]

\newtheorem{lemma}[theorem]{Lemma}

\newtheorem{proposition}[theorem]{Proposition}
\theoremstyle{definition}
\newtheorem{example}[theorem]{Example}
\newtheorem{problem}[theorem]{Problem}

\def\pf{\noindent{\it Proof.}}

  \def\char{\,\hbox{\rm char}\,}

\begin{document}
\title[Cover graphs]{On Arc-Transitive Regular Covers of Cubic Edge-Primitive Graphs}
\thanks{Corresponding author: Hao Yu}
\thanks{2010 Mathematics Subject Classification. 05C25, 20B25}
\thanks{This work was supported by NNSFC (12571362), NSF of Guangxi (2025GXNSFAA069013) and the Special Foundation for Guangxi Ba Gui Scholars}
\author[F. Deng, J.J. Li, Y. Wang \and H. Yu]{%
Feng Deng, Jing Jian Li, Yu Wang and Hao Yu}
\address{Guangxi Center for Mathematical Research   \\
\& Center for Applied Mathematics of Guangxi (Guangxi University) \\
Guangxi University \\
Nanning, Guangxi 530004, P.R. China.}
\email{dengfeng@st.gxu.edu.cn (F. Deng); \allowbreak lijjhx@gxu.edu.cn (J.J. Li);
\allowbreak   haoyu@gxu.edu.cn (H. Yu).}

\address{Center for Combinatorics and LPMC, Nankai University, Tianjin, 300071, P.R. China}
\email{wangyu972022@163.com (Y. Wang)}

\begin{abstract}
We determine, up to isomorphism of the covering graphs, the connected arc-transitive regular covers of cubic edge-primitive graphs whose covering transformation group is cyclic or elementary abelian of order $p^2$, where $p$ is a prime. Combining the known classifications for the base graphs ${\rm K_{3,3}}$ and ${\rm DC_{14}}$ with new arguments for ${\rm F30A}$ and ${\rm F102A}$ gives the full list in these two classes of covering groups. In the cyclic case, the covers of ${\rm F30A}$ and ${\rm F102A}$ are ${\rm F90A}$ and ${\rm F204A}$, respectively. In the elementary abelian case, neither ${\rm F30A}$ nor ${\rm F102A}$ admits an arc-transitive regular $\mathbb{Z}_p^2$-cover, so the base graph is ${\rm K_{3,3}}$ or ${\rm DC_{14}}$.

\vskip 5pt 

\noindent {\sc Keywords}. Regular cover; Edge-primitive; Arc-transitive; Fibre-preserving group

\end{abstract}
\maketitle

\parskip 5pt
\section{Introduction}
All graphs considered in this paper are finite, simple, undirected, and connected. 
For a graph $\Ga$, we denote by $V(\Ga)$, $E(\Ga)$, ${\rm Arc}(\Ga)$ and $\Aut(\Ga)$, its vertex set, edge set, arc set and full automorphism group, respectively.

For a positive integer $s$, an {\it $s$-arc} in $\Ga$ is a sequence of vertices $(\a_0,\,\a_1,\,\dots,\,\a_s)$ such that $\{\a_i,\,\a_{i+1}\}\in E(\Ga)$ for $0\leqslant i\leqslant s-1$, and $\a_{j-1}\ne\a_{j+1}$ for $1\leqslant j\leqslant s-1$. 
The graph $\Ga$ is said to be \textit{$(X,s)$-arc-transitive} if $X$ acts transitively on both the vertices and the $s$-arcs of $\Ga$, where $X\leqslant\Aut(\Ga)$. 
It is called \textit{$(X,s)$-transitive} if it is $(X,s)$-arc-transitive but not $(X,s+1)$-arc-transitive. 
For $s=1$, $\Ga$ is simply called \textit{$X$-arc-transitive} (also called \textit{$X$-symmetric}). 
The graph $\Ga$ is called \textit{$X$-edge-primitive} if $X$ acts transitively on the edges of $\Ga$ and the stabilizer $X_{\{\a,\,\b\}}$ of some edge $\{\a,\,\b\}\in E(\Ga)$ is a maximal subgroup of $X$. 
The graph is called \textit{$s$-arc-transitive} (\textit{edge-primitive}) if $X=\Aut(\Ga)$.
An arc-transitive graph $\Ga$ is said to be \textit{$(X,s)$-arc-regular} if $X$ acts regularly on the set of all $s$-arcs of $\Ga$. The graph is called \textit{$s$-arc-regular} if $X=\Aut(\Ga)$.
Following the pioneering work of Tutte \cite{T1947}, cubic graphs with specific symmetry properties have been extensively studied. 
For an $s$-arc-regular cubic graph $\Ga$, Djokovi\'{c} and Miller \cite[Propositions 2–5]{DM1980} showed that the stabilizer in $\Aut(\Ga)$ of a vertex $v\in V(\Ga)$ is isomorphic to $\mz_3,\,\Sy_3,\,\Sy_3\times\mz_2,\,\Sy_4$ or $\Sy_4\times\mz_2$ for $s=1,\,2,\,3,\,4,\,5$, respectively. 
Based on this result, Conder and Potočnik \cite{CP2026} conducted an exhaustive computer search to produce a complete list of $s$-arc-regular cubic graphs on up to 10000 vertices.

Let $\Ga$ be a graph, let $X\leqslant\Aut(\Ga)$, and let $K\unlhd X$. Suppose that $K$ is semiregular on $V(\Ga)$. The \emph{normal quotient graph} $\Ga_K$ is the graph whose vertices are the $K$-orbits on $V(\Ga)$, with two distinct $K$-orbits adjacent whenever there is an edge of $\Ga$ joining them. The graph $\Ga$ is called a \emph{regular cover}, or a \emph{$K$-cover}, of a graph $\Sigma$ if $\Ga_K\cong\Sigma$ and, for every pair of adjacent $K$-orbits $B$ and $C$, each vertex of $B$ has exactly one neighbour in $C$. In this case, $K$ is called the \emph{covering transformation group}, the $K$-orbits on $V(\Ga)$ are called the \emph{fibres}. Equivalently, for each $\a\in V(\Ga)$, the quotient map induces a bijection from $N(\a)$ onto $\Ga_K(\a^K)$, where $N(\a)$ denotes its neighbourhood in $\Ga$. In particular, $\Ga$ and $\Ga_K$ have the same valency. 
In this paper, two regular covers are identified whenever their covering graphs are isomorphic. Thus the classification below concerns covering graphs up to graph isomorphism and does not distinguish equivalent or inequivalent covering projections onto the same base graph.
If $K$ is cyclic, the cover $\Ga$ is called a \textit{cyclic cover} of $\Sigma$.
An automorphism of $\Ga$ is said to be \textit{fibre-preserving} if it permutes the fibres. All such automorphisms form a group, called the \textit{fibre-preserving group}. 
Regular covers of small order graphs have received considerable attention. For instance, the 3-dimensional cube graph \cite{FW2003}, the Heawood graph \cite{MMP20041}, ${\rm K_4}$ \cite{FK2007, FK20041}, ${\rm K_{3,3}}$ \cite{FK2007, FK20042}, Petersen graph \cite{MP2006}, the Pappus graph \cite{O2009}, and etc. 
The regular covers of the complete graph ${\rm K_n}$ with 2-arc-transitive fibre-preserving automorphism groups have been classified when the covering transformation group is cyclic \cite{DMW1998}, isomorphic to $\mz_p^2$ \cite{DMW1998}, metacyclic \cite{XDKX2015}, or isomorphic to $\mz_p^3$ \cite{DKX2005} (where $p$ is a prime). For further related work, readers may refer to the literature \cite{FK2006, FKX2006, FKW2005, KO2009, MMP20041, PHL2015}.

The classification of cubic edge-primitive graphs is known \cite{W1973}. We consider their arc-transitive regular covers when the covering transformation group is cyclic or isomorphic to $\mz_p^2$ for some prime $p$. The cases with base graph ${\rm K_{3,3}}$ or ${\rm DC_{14}}$ follow from the classifications in \cite{FK20042,WH2010,DXY2018,CM20132}. The new part of the present paper is the determination of the cyclic covers with base graph ${\rm F30A}$ or ${\rm F102A}$ and the proof that neither of these two graphs admits an arc-transitive regular $\mz_p^2$-cover. Combining these results gives the two classification theorems below. The notation used in their statements is introduced in the next section.

\begin{theorem}\label{result 1}
Let $\Ga$ be a connected $X$-arc-transitive $K$-cover of a cubic edge-primitive graph $\Sigma$, where $X$ is the fibre-preserving group and $K\cong\mz_n$ with $n\geqslant2$. Then $\Sigma\cong{\rm K_{3,3}}$, ${\rm DC_{14}}$, ${\rm F30A}$ or ${\rm F102A}$, and one of the following holds.
\begin{itemize}
\item [(1)] If $\Sigma\cong{\rm K_{3,3}}$, then $\Ga\cong G(k,n)$ for some $k$ satisfying $2\leqslant k<n$ and $k^2+k+1\equiv0\pmod n$, or $n=3$ and $\Ga\cong{\rm F18A}$.
\item [(2)] If $\Sigma\cong{\rm DC_{14}}$, then either $\Ga\cong\mathcal{D}(1,7n,\lambda)$ with $\lambda^2+\lambda+1\equiv0\pmod{7n}$, or $7\mid n$ and $\Ga\cong\mathcal{D}(7,n/7,\mu)$, where $\mu=0$ for $n=7$ and $\mu^2+\mu+1\equiv0\pmod{n/7}$ for $n>7$.
\item [(3)] If $\Sigma\cong{\rm F30A}$, then $n=3$ and $\Ga\cong{\rm F90A}$.
\item [(4)] If $\Sigma\cong{\rm F102A}$, then $n=2$ and $\Ga\cong{\rm F204A}$.
\end{itemize}
\end{theorem}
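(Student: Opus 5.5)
The proof will go in three stages. First, I will pin down the possible base graphs. By Weiss's classification \cite{W1973}, a connected cubic edge-primitive graph is isomorphic to one of ${\rm K_4}$, ${\rm K_{3,3}}$, the Petersen graph, the Heawood graph ${\rm DC_{14}}$, ${\rm F30A}$, the Coxeter graph ${\rm F28}$, the Tutte 8-cage ${\rm F30A}$, ${\rm F102A}$ (and possibly small exceptions). I will rule out ${\rm K_4}$, the Petersen graph and the Coxeter graph. For these, the known classifications of arc-transitive cyclic covers force the covering graph to be bipartite while the base is not, or they leave no cover at all. For ${\rm K_4}$ one can use \cite{FK2007}, and for Petersen one can use \cite{MP2006}; the only surviving candidates come from base graphs that are not edge-primitive, or the statement already lists them. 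Where the edge-primitivity hypothesis enters, I will check it against Weiss's list. That leaves $\Sigma\in\{{\rm K_{3,3}},{\rm DC_{14}},{\rm F30A},{\rm F102A}\}$. Parts (1) and (2) are then quoted directly from \cite{FK20042} for ${\rm K_{3,3}}$ and from \cite{WH2010,DXY2018,CM20132} for ${\rm DC_{14}}$, after translating their notation into $G(k,n)$ and $\mathcal{D}(\cdot,\cdot,\cdot)$.

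Second, for $\Sigma={\rm F30A}$ or ${\rm F102A}$, I will use the standard structure of the fibre-preserving group. $K\unlhd X$ acts semiregularly, and $X/K$ is an arc-transitive subgroup of $\Aut(\Sigma)$. Since $X$ is arc-transitive on a cubic graph, $|X_\alpha|$ divides $48$. Because $K$ is cyclic and normal, $X/C_X(K)$ embeds in $\Aut(\mz_n)$, which is abelian. Hence $X/C_X(K)$ is an abelian quotient of $X/K$ and lies below $(X/K)/(X/K)'$. For ${\rm F30A}$ we have $\Aut=\mathrm{P\Gamma L}(2,9)$ with arc-transitive subgroups $\mathrm{PSL}(2,9)$-related groups (socle $\A_6$). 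For ${\rm F102A}$ the socle is $\PSL(2,17)$. In both cases the socle $T$ is perfect, so $T$ lifts to a subgroup $\tilde T=C_X(K)^{(\infty)}$ with $\tilde T K/K\cong T$. This makes $\tilde T\cap K$ central in $\tilde T$ and a quotient of the Schur multiplier of $T$: $M(\A_6)\cong\mz_6$ and $M(\PSL(2,17))\cong\mz_2$.

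The key dichotomy is between $\tilde T\cap K=1$ and $\tilde T\cap K\ne1$. If $\tilde T\cap K=1$, then $\tilde T\cong T$ and $X$ contains $K\times\tilde T$ as a normal subgroup. Then $\tilde T$ has orbits that are copies of its action on $\Sigma$, and I will argue that the $\tilde T$-orbits form an $X$-invariant partition giving a quotient. Counting vertex stabilisers shows that $\tilde T$ would have to be vertex-transitive on a graph of $n|V(\Sigma)|$ vertices with $|\tilde T_\alpha|\le 48$. Alternatively, $\tilde T$ is intransitive with $n$ orbits. The edges between orbits then force the orbits to be unions that contradict connectivity plus arc-transitivity, since a single $\tilde T$-orbit would be an $X$-block inducing a disconnected or degenerate cover. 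So $\tilde T\cap K\ne1$, and $n$ is controlled by the multiplier: $|K:K\cap \tilde T|$ is bounded via the abelian action and via the coset-graph construction.

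Third, I will make this precise and identify the graphs. In the nonsplit case, $\tilde T$ is a central extension $3.\A_6$, $2.\A_6$, $6.\A_6$ or $\SL(2,17)$. I will determine which of these admit an arc-transitive coset graph of valency 3 covering $\Sigma$. The requirement is a subgroup $H$ with $H\cap Z=1$ mapping onto a vertex stabiliser, together with an element swapping the arc, of order a power of $2$ and with $g^2\in H$. For $\A_6$, the vertex stabiliser in ${\rm F30A}$ (within $\mathrm{S}_6$ or $\mathrm{P\Gamma L}$) has order 48 or 24 and contains $\mathrm{Q}_8$-type $2$-subgroups. The $2$-part of the multiplier then obstructs lifting in $2.\A_6$, because involutions in $\A_6$ lift to elements of order 4. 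The $3$-part allows exactly $n=3$, giving the graph ${\rm F90A}$. For $\PSL(2,17)$, the stabiliser $\mathrm{S}_4$ lifts into $\SL(2,17)$ only partially. Still, the arc-transitive action of $\PSL(2,17)\times\mz_2$-type on $204$ vertices exists, giving ${\rm F204A}$ with $n=2$. Finally, I will check that no larger $n$ arises: any larger cyclic $K$ would give $K/(\tilde T\cap K)$ nontrivial with $\tilde T\cap K$ central, reducing to the split case already excluded.

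I expect the main obstacle to be this final lifting analysis. One must verify, using the Schur cover character of the vertex and edge stabilisers, exactly which central extensions support a cubic arc-transitive coset graph. One must also confirm that the resulting graphs are ${\rm F90A}$ and ${\rm F204A}$; I would confirm this by vertex count and the Foster census.
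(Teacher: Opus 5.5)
\textbf{There is a genuine gap: the split/non-split dichotomy fails where it matters.}

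You claim the case $\tilde T\cap K=1$ cannot occur, but ${\rm F204A}$ is a split example. There $X\cong\mz_2\times\PSL(2,17)$, $\tilde T=X'\cong\PSL(2,17)$ and $\tilde T\cap K=1$, and $\tilde T$ has two orbits of length $102$, namely the two halves of the standard double cover. For $\Sigma\cong{\rm F102A}$ the split case is in fact the only one possible, since $\tilde T_\a$ contains $\A_4$ while $\SL(2,17)$ has a unique involution.

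Consequences:
\begin{itemize}
\item Your sketched exclusion (a $\tilde T$-orbit giving a ``disconnected or degenerate cover'') cannot be carried out.
\item Your last step (``any larger $K$ reduces to the split case already excluded'') has nothing to rest on.
\item The proposal contradicts its own part (4).
\item The test in your third stage is the wrong one. $\tilde T$ need not carry an arc-transitive coset graph, because it is not even vertex-transitive on $\Ga$. In ${\rm F90A}$, $\tilde T\cong 3.\A_6$ with stabiliser $\Sy_4$ has two orbits of length $45$, and the same two-orbit behaviour occurs in ${\rm F204A}$.
\item The vertex stabilisers $\Sy_4$ and $\Sy_4\times\mz_2$ of ${\rm F30A}$ contain no $\mathrm{Q}_8$.
\end{itemize}

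\textbf{What is missing is the mechanism that actually bounds $n$, and it handles both cases at once.}

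The stabiliser $(K\tilde T)_\a$ has index at most $2$ in $X_\a$, and $(K\tilde T)_\a/\tilde T_\a$ embeds in the abelian group $K\tilde T/\tilde T$. Hence $\tilde T_\a$ contains $\A_4$, or at least $\mz_2^2$; this is also what eliminates $2.\A_6$, $6.\A_6$ and $\SL(2,17)$. So $\tilde T\unlhd X$ is not semiregular, and Proposition~\ref{NQ} forces $\tilde T$ to have at most two orbits on $V(\Ga)$.

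Counting gives $n|V(\Sigma)|=t\,|\tilde T\cap K|\,|T|/|\tilde T_\a|$, where $t\leqslant 2$ is the number of $\tilde T$-orbits. This leaves only $|V(\Ga)|\in\{60,90,180,540\}$ for ${\rm F30A}$ and $|V(\Ga)|\in\{204,408\}$ for ${\rm F102A}$. The orders $60$, $180$, $408$ and $540$ are discarded by the Foster census, since no cubic graph with the forced degree of arc-transitivity exists there. This is not done by a structural argument. The census then identifies $\Ga$ at orders $90$ and $204$.

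\textbf{Your first stage also misreads Weiss.}

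${\rm K_4}$, the Petersen graph and the Coxeter graph are not edge-primitive, because their edge stabilisers are not maximal in the full automorphism group. \cite{W1973} gives exactly ${\rm K_{3,3}}$, ${\rm DC_{14}}$, ${\rm F30A}$ and ${\rm F102A}$. Your proposed way of discarding those three graphs via cover classifications would not work anyway. $Q_3$ is an arc-transitive $\mz_2$-cover of ${\rm K_4}$, and the dodecahedron is an arc-transitive $\mz_2$-cover of the Petersen graph.
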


Next, we consider regular covers of cubic edge-primitive graphs with covering transformation group $\mz_p^2$ ($p$ prime).

\begin{theorem}\label{result 2}
Let $\Ga$ be a connected $X$-arc-transitive $K$-cover of a cubic edge-primitive graph $\Sigma$, where $X$ is the fibre-preserving group and $K\cong\mz_p^2$ for some prime $p$. Then $\Sigma\cong{\rm K_{3,3}}$ or ${\rm DC_{14}}$, and the following statements hold.
\begin{itemize}
\item [(1)] If $\Sigma\cong{\rm K_{3,3}}$, then $\Ga\cong{\rm F24A}$, ${\rm F54A}$, $X(3)$ or ${\rm K_{3,3}}\times_{\ell^i}\mz_p^2$ with $i=1,2$ and $p\geqslant5$.
\item [(2)] If $\Sigma\cong{\rm DC_{14}}$, then $\Ga\cong\mathcal{H}_p$, or $p=7$ and $\Ga\cong{\rm F686C}$.
\end{itemize}
\end{theorem}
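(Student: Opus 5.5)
The plan is to reduce the statement to the four base graphs and treat each separately. By Weiss's classification \cite{W1973}, a connected cubic edge-primitive graph is isomorphic to ${\rm K_4}$, ${\rm K_{3,3}}$, the Heawood graph ${\rm DC_{14}}$, the Tutte $8$-cage ${\rm F30A}$, or the Biggs--Smith graph ${\rm F102A}$.

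The case ${\rm K_4}$ must be ruled out or shown absent. Since edge-primitivity of ${\rm K_4}$ fails (the edge stabilizer in $\Sy_4$ is not maximal), it should not appear in Weiss's list, and I will check this first.

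For $\Sigma\cong{\rm K_{3,3}}$ I will quote the classification of arc-transitive $\mz_p^2$-covers in \cite{FK20042,WH2010}, which gives ${\rm F24A}$, ${\rm F54A}$, $X(3)$ and ${\rm K_{3,3}}\times_{\ell^i}\mz_p^2$. For $\Sigma\cong{\rm DC_{14}}$ I will quote \cite{DXY2018,CM20132}, which gives $\mathcal{H}_p$ and, for $p=7$, ${\rm F686C}$. The only work in these two cases is to check that the hypothesis that $X$ is the full fibre-preserving group matches theirs, namely that the lifted group acts arc-transitively.

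The new content is to show that no connected arc-transitive $\mz_p^2$-cover of ${\rm F30A}$ or ${\rm F102A}$ exists. I will use the standard voltage/lifting criterion. $K$ is the quotient of $H_1(\Sigma;\mz)$, or rather of the fundamental group, by an $\Aut(\Sigma)$-admissible subgroup. An arc-transitive lift exists with $K\cong\mz_p^2$ only if some arc-transitive subgroup $A\le\Aut(\Sigma)$ lifts. Lifting is equivalent to the existence of an $A$-invariant $\mathbb{F}_pA$-submodule $M$ of $H_1(\Sigma;\mathbb{F}_p)$ of codimension $2$, with $H_1/M\cong\mathbb{F}_p^2$ as a module.

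So I need the $\mathbb{F}_p A$-module structure of the cycle space. For ${\rm F30A}$ this module has dimension $16$ with $\Aut={\rm P\Gamma L}(2,9)$. For ${\rm F102A}$ it has dimension $52$ with $\Aut=\PSL(2,17)$. The arc-transitive subgroups $A$ are few: those of ${\rm F30A}$ containing $\PSL(2,9)$ or the relevant $5$-arc-transitive subgroups, and $\PSL(2,17)$ for ${\rm F102A}$.

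A $2$-dimensional quotient means $A$ has a $2$-dimensional representation over $\mathbb{F}_p$. Modulo the kernel, that gives a homomorphism $A\to\GL(2,p)$. Since $A$ has the nonabelian simple normal subgroup $\PSL(2,9)\cong\A_6$ or $\PSL(2,17)$, and neither embeds in $\PGL(2,p)$ projectively in a way lifting to $\GL(2,p)$ (subgroups of $\PSL(2,q)$ that are simple are only $\A_5$ and $\PSL(2,p^f)$), this simple subgroup must act trivially on $K$.

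It then remains to show that $T\cong\PSL(2,9)$ or $\PSL(2,17)$ acting trivially forces $K$ to be a quotient of the $T$-coinvariants $H_1(\Sigma;\mathbb{F}_p)_T$. This amounts to $H_1(\Sigma/\!/T)$, the first homology of the quotient orbifold by the vertex-transitive group $T$. I will compute it as the abelianization of $T$-related data: $K$ central in the preimage of $T$ means the lifted group $\tilde T$ is a central extension. Then $\tilde T/\tilde T'$ maps onto $K$ while $\tilde T$ is perfect-by-central, so $K$ embeds in the Schur multiplier of $T$, which is $\mz_6$ for $\A_6$ and $\mz_2$ for $\PSL(2,17)$.

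A quotient $\mz_p^2$ is impossible since these multipliers are cyclic. That gives the nonexistence, and also, incidentally, the cyclic covers ${\rm F90A}$ ($n=3$) and ${\rm F204A}$ ($n=2$). Connectivity of $\Gamma$ is what forces $K\le\tilde T'$ via the vertex-transitivity of $T$. The main obstacle is this step: rigorously justifying that $K$ lies in the derived subgroup of the lift of $T$. I will use that $T$ is already vertex-transitive, or at least has at most two orbits, on $\Sigma$, together with the connectedness of $\Gamma$, and check the bipartite case ${\rm F30A}$ carefully, possibly with a small Magma computation.
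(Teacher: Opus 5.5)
Your reduction to the four base graphs is fine, and so is the use of $\GL(2,p)$ to make $T$ central on $K$. The gap is in the step you yourself flag as the main obstacle: it cannot be carried out, because it is false. Connectivity of $\Ga$ and transitivity of $T$ do not force $K\leqslant\tilde T'$, so $K$ need not embed in ${\rm Mult}(T)$.

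Your own list contains a counterexample. ${\rm F204A}={\rm F102A}\times{\rm K_2}$ is connected, since ${\rm F102A}$ is not bipartite. By Lemma~\ref{F102A} its fibre-preserving group is $X\cong\mz_2\times\PSL(2,17)$. Hence $\tilde T'=X'\cong\PSL(2,17)$ and $K\cap\tilde T'=1$, although $T=\PSL(2,17)$ is vertex-transitive on $\Sigma$.

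In your homological language, let $U$ be the group of all lifts of $T$ to the cubic tree. The five-term sequence for $1\to\pi_1(\Sigma)\to U\to T\to1$ gives ${\rm Mult}(T)\to H_1(\Sigma;\mz)_T\to U^{\rm ab}\to0$. You have dropped the term $U^{\rm ab}$, which is exactly where $\tilde T/\tilde T'\cong K/(K\cap\tilde T')$ lives. For ${\rm F30A}$ and $T\cong\A_6$ one has $U\cong\Sy_4*_{\D_8}\Sy_4$, the factors coming from the two $\A_6$-classes of $\Sy_4$, and $U^{\rm ab}=1$; so there your claim happens to be true. For ${\rm F102A}$, however, $U\cong\Sy_4*_{\D_8}\D_{16}$ and $U^{\rm ab}\cong\mz_2$: the elements of order $4$ in $\D_8$ lie in $\D_{16}'$ but are odd in $\Sy_4$. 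The $\mz_2$-cover this quotient produces is exactly ${\rm F204A}$. Consequently your argument, as planned, does not exclude a $\mz_2^2$-cover of ${\rm F102A}$ with $X\cong\mz_2^2\times\PSL(2,17)$ on $408$ vertices.

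The paper never claims $K\leqslant\tilde T'$. It uses that $\tilde T'\unlhd X$ is not semiregular, since its vertex stabiliser contains $(\tilde T_\a)'\neq1$; by Proposition~\ref{NQ} it therefore has at most two orbits. This only bounds $|K:K\cap\tilde T'|$. Together with the exclusion of $\SL(2,9)$ and $\SL(2,17)$ via their unique involution and an orbit count, it leaves the orders $270$ and $408$, which the paper removes with the Foster census. The order-$408$ case over ${\rm F102A}$ genuinely survives the counting.

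Within your framework you could instead keep $U^{\rm ab}$. The $\SL(2,17)$ argument gives $K\cap\tilde T'=1$, so $K\cong\tilde T/\tilde T'$ is a quotient of $U^{\rm ab}\cong\mz_2$, which is cyclic.

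Separately, the ${\rm K_{3,3}}$ case is not a quotation. \cite{FK20042} and \cite{WH2010} classify \emph{cyclic} covers (of ${\rm K_{3,3}}$ and of ${\rm DC_{14}}$), and \cite{DXY2018} treats only $2$-arc-transitive $\mz_p^2$-covers. The paper settles $p=2,3$ by the census and enumerates the arc-transitive subgroups of $\Aut({\rm K_{3,3}})$. It then handles the arc-regular subgroup $\mz_3^2\rtimes\mz_2$ by a separate voltage analysis from Kwak--Nedela, which produces ${\rm K_{3,3}}\times_{\ell^i}\mz_p^2$. Your plan has nothing covering that case.
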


After this introductory section, the necessary notation, the examples of the graphs arising in Theorems \ref{result 1} and \ref{result 2}, and other useful facts will be given in the next section, and Theorems \ref{result 1} and \ref{result 2} will be proved in Sections \ref{sec3} and \ref{sec4}, respectively.

\section{Preliminaries}\label{sec2}

The notation ${\rm FnA}$, ${\rm FnB}$, etc., refer to the corresponding graphs of order $n$ in the Foster census of cubic symmetric graphs \cite{CP2026}. For definitions and notations not explicitly stated here, we refer the reader to \cite{XD2018}. 

{\bf 2.1. Examples}

To state the main theorems, we first introduce some infinite families of covers of ${\rm K_{3,3}}$ whose covering transformation groups are either $\mz_n$ or $\mz_p^2$ for some integer $n>1$ and prime $p$. 
Let $\Sigma={\rm K_{3,3}}$ be the complete bipartite graph with bipartition $\{1,\,3,\,5\}\cup\{2,\,4,\,6\}$, whose full automorphism group is isomorphic to $\Sy_3\wr\mz_2$. 

\begin{example}\cite[Page 103]{FK20042}
Let $k$ and $n$ be two non-negative integers. 
We define a family of derived graphs $G(k,n):={\rm K_{3,3}}\times_h\mz_n$. Here, the voltage assignment $h: {\rm Arc(K_{3,3})}\to \mz_n$ satisfies $h_{4,1}={h_{1,4}}^{-1}={h_{6,3}}^{-1}=h_{3,6}=k+1, \quad h_{4,5}={h_{5,4}}^{-1}={h_{6,1}}^{-1}=h_{1,6}=1$ and $h_{u,v}=0$ for all other arcs $(u,v)$ of ${\rm K_{3,3}}$.
\end{example}

\begin{example}\cite[Equation (1) and Proposition 2.4]{AP2018}\label{def:DCfamily}
Let $m$ and $q$ be positive integers, and define $D(m,q)=\l a, b, c\mid a^2=b^{mq}=c^m=1, aba=b^{-1}, aca=c^{-1}, bc=cb\r$. Take $\lambda=0$ for $q=1$, and for $q>1$ let $\lambda$ satisfy $\lambda^2+\lambda+1\equiv0\pmod q$. Define $\mathcal{D}(m,q,\lambda)=\Cay(D(m,q), \{a, ab, ab^{-\lambda}c\})$. For $m=1$, the generator $c$ is trivial and $D(1,q)\cong\D_{2q}$. In particular, $\mathcal{D}(1,7,2)\cong\mathcal{D}(1,7,4)\cong{\rm DC_{14}}$.
\end{example}

The three elements in the connection set are involutions and generate $D(m,q)$, so $\mathcal{D}(m,q,\lambda)$ is connected and cubic.

\begin{example}\cite[Sections 2--6]{CM20132}\label{def:Hp}
Let $\mathcal{U}=\l h,a\mid h^3=a^2=1\r$, $r=[h,a]$ and $s=[h^{-1},a]$, and let $N$ be the normal subgroup of index $42$ in $\mathcal{U}$ freely generated by $w_1=[r,s^{-1}]$, $w_2=rsr$, $w_3=[s,r^{-1}]$, $w_4=s^2r^{-1}s$, $w_5=r^2s$, $w_6=(rs^{-1})^2r$, $w_7=sr^2$ and $w_8=sr^{-1}s^2$. Let $v_1=w_1$, $v_2=w_2w_7^{-1}$, $v_3=w_3$, $v_4=w_4w_8^{-1}$, $v_5=w_5w_7^{-1}$ and $v_6=w_6w_7^{-1}w_8$. For a prime $p$, put $L_p=\l N',v_1,\ldots,v_6,w_1^p,\ldots,w_8^p\r$. Then $L_p\unlhd\mathcal{U}$ and $N/L_p\cong\mz_p^2$. Let $\mathcal{U}_p=\mathcal{U}/L_p$ and $H_p=\l hL_p\r$, and define $\mathcal{H}_p=\Cos(\mathcal{U}_p,H_p,H_p(aL_p)H_p)$.
\end{example}

By \cite[Theorem 7.1]{CM20132}, $\mathcal H_p$ is a connected arc-transitive cubic $\mz_p^2$-cover of ${\rm DC_{14}}$. If $p\neq7$, then $\mathcal H_p$ is the unique such cover and is $1$-arc-regular. For $p=7$, the two covers are ${\rm F686B}$ and ${\rm F686C}$, which are $2$-arc-regular and $1$-arc-regular, respectively. Moreover, $\mathcal H_2\cong{\rm F56A}$ and $\mathcal H_7\cong{\rm F686B}$.

We now describe three infinite families of covers. In each family, $K$ is identified with the additive group of the two-dimensional vector space $V(2,p)$ over the finite field $\mathbb{F}_p$, where $p$ is prime.

\begin{example}\cite[Page 806(3)]{DXY2018}\label{cover1}
Let $X(3)={\rm K_{3,3}}\times_f\mz_p^2$ be a family of derived graphs, where the voltage assignment $f: {\rm Arc}({\rm K_{3,3}})\to \mz_p^2$ satisfies 
\[
f_{1,4}=(0,1)=f_{4,1}^{-1}, \quad f_{1,6}=(1,0)=f_{6,1}^{-1}, \quad f_{5,2}=(0,-1)=f_{2,5}^{-1}, \quad f_{5,4}=(-1,0)=f_{4,5}^{-1},
\] 
and $f_{u,v}=0$ otherwise, for each prime $p\geqslant5$.
\end{example}

\begin{example}\cite[Page 79, Table 2.1]{KN2007}\label{cover2}
Define two families of derived graphs ${\rm K_{3,3}}\times_{\ell^i}\mz_p^2$, for $i=1,\,2$ and prime $p\geqslant 5$, where the voltage assignments $\ell^i: \mathrm{Arc}(K_{3,3})\to \mz_p^2$ are given by Figure \ref{K_{3,3}}, with nonzero voltages specified by 
\[
(x_1^T,x_2^T,x_3^T,x_4^T)=
\begin{bmatrix}
1 & 1 & 1 & -2  \\
1 & -1 & 1 & 0
\end{bmatrix}
\mbox{ or }
\begin{bmatrix}
1 & 0 & -1 & 0  \\
0 & 1 & 1 & 0
\end{bmatrix}
\]
respectively, where $T$ denotes the transpose of a vector.
\end{example}

\begin{figure}[ht]
  \centering
\begin{tikzpicture}
\tikzstyle{main node}=[draw, shape=circle, label distance=-0.5mm, inner sep=1pt];
\node[main node] (2) at (0,2) [label=above:\tiny{$2$}] {};
\node[main node] (4) at (2,2) [label=above:\tiny{$4$}] {};
\node[main node] (6) at (4,2) [label=above:\tiny{$6$}] {};
\node[main node] (1) at (0,0) [label=below:\tiny{$1$}] {};
\node[main node] (3) at (2,0) [label=below:\tiny{$3$}] {};
\node[main node] (5) at (4,0) [label=below:\tiny{$5$}] {};
\draw[line width=1pt] (6)--(1)--(2)--(3) (5)--(6)--(3)--(4)--(5)--(2)(4)--(1);
\draw[line width=1pt, -stealth] (1)--(0.75,0.75) node[midway, above] {\tiny{$x_1$}};
\draw[line width=1pt, -stealth] (2)--(1,1.5) node[midway, above] {\tiny{$x_2$}}; 
\draw[line width=1pt, -stealth] (3)--(2.5,0.5) node[midway, right] {\tiny{$x_3$}};
\draw[line width=1pt, -stealth](6)--(3,1.5) node[midway, above] {\tiny{$x_4$}};
\end{tikzpicture}
\caption{${\rm K_{3,3}}$ with voltage assignments}\label{K_{3,3}}
\end{figure}

Before presenting covers of others, we called several important cubic symmetric graphs as follows. 
The Heawood graph ${\rm DC_{14}}$, with full automorphism group isomorphic to $\PGL(2,7)$, is 4-arc-regular.
The Pappus graph, denoted ${\rm F18A}$, is the unique cubic symmetric graph on 18 vertices. It is a bipartite 3-arc-regular graph with 27 edges, and its full automorphism group is isomorphic to $\mz_3.(\Sy_3\wr\mz_2)$.
The Tutte-Coxeter graph ${\rm F30A}$ is 5-arc-regular, whose full automorphism group is isomorphic to $\PGammaL(2,9)$.
The Biggs-Smith graph ${\rm F102A}$ is a cubic distance-transitive graph whose full automorphism group is isomorphic to $\PSL(2,17)$, and it is 4-arc-regular.

\begin{example}\label{90V}
Let $\Ga\cong{\rm F90A}$ and let $A=\Aut(\Ga)$. By \cite[Web]{CP2026}, $A\cong\mz_3.\A_6.\mz_2^2$, and so $A$ contains a normal subgroup $K\cong\mz_3$. Since $\Ga$ is a connected arc-transitive cubic graph, it is $A$-vertex-transitive and locally primitive. The subgroup $K$ has at least three orbits on $V(\Ga)$, so \cite[Lemma 2.5]{LP2008} shows that $K$ is semiregular and $\Ga$ is a regular $K$-cover of $\Ga_K$. Thus $\Ga_K$ is a connected arc-transitive cubic graph of order $30$. By \cite[Web]{CP2026}, the unique such graph is ${\rm F30A}$. Hence ${\rm F90A}$ is a regular $\mz_3$-cover of ${\rm F30A}$.
\end{example}

Let $\Sigma$ be a finite simple graph. The \textit{ standard double cover} of $\Sigma$ is the graph $\Sigma\times {\rm K_2}$, where $\times$ denotes the direct product of graphs. 
Explicitly, the vertex set is $V(\Sigma)\times\{1,\,2\}$, and two vertices $(u,i)$ and $(v,j)$ are adjacent if and only if $u$ and $v$ are adjacent and $i\neq j$.
The graph $\Sigma \times {\rm K_2}$ is bipartite with bipartite halves $V(\Sigma)\times\{1\}$ and $V(\Sigma)\times\{2\}$. 

\begin{example} \label{204V}
Let $\Ga={\rm F102A}\times{\rm K_2}$ be the direct product of graphs. 
Then $\Ga$ is cubic symmetric graph of order $204$ and \cite[Web]{CP2026} shows that  $\Ga\cong {\rm F204A}$. Hence $\Ga$ is exactly the standard double cover of ${\rm F102A}$.
\end{example}

{\bf 2.2. Graph and group theoretic results}

An important method for studying vertex-transitive graphs is taking normal quotient graphs. 
A graph $\Ga$ is said to be \textit{$X$-locally primitive} if for every vertex $v$, the stabilizer $X_v$ acts primitively on $N(v)$. 
It is known that every vertex-transitive locally primitive graphs is a regular cover of some basic locally primitive graph \cite{PLN1996}. 
This fact shows that the classification of locally primitive graphs can be reduced to the study of regular covers of their base graphs. 
Noting that an edge-transitive graph with prime valency is locally primitive. 
The following proposition provides a basic method for studying vertex-transitive locally-primitive graphs, which was first proved by Praeger \cite[Theorem 4.1]{P1993} for 2-arc-transitive graphs and slightly generalized to locally-primitive graphs.

\begin{proposition}\cite[Lemma 2.5]{LP2008}\label{NQ}
Let $\Ga$ be a connected $X$-vertex-transitive locally-primitive graph, and let $K\unlhd X$ have at least three orbits on $V(\Ga)$. Then the following statements hold. 
\begin{itemize}
\item [(1)] $K$ is semiregular on $V(\Ga)$, $X/K\leqslant\Aut(\Ga_K)$, $\Ga_K$ is $X/K$-locally-primitive, and $\Ga$ is an $K$-cover of $\Ga_K$; 
\item [(2)] $\Ga$ is $(X,s)$-arc-transitive if and only if $\Ga_K$ is $(X/K,s)$-arc-transitive, where $1\leqslant s\leqslant 5$ or $s=7$;
\item [(3)] $X_\a\cong (X/K)_v$, where $\a\in V(\Ga)$ and $v\in V(\Ga_K)$.
\end{itemize}
\end{proposition}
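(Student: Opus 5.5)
This is Praeger's normal quotient lemma for locally-primitive graphs, and I would follow the standard argument: use local primitivity at a vertex to force $K$ to act on each neighbourhood either trivially or transitively, exclude the transitive case using the hypothesis of at least three orbits, and then read off (1)--(3). Write $\a\in V(\Ga)$, $B=\a^K$, and note that the $K$-orbits form an $X$-invariant partition, since $K\unlhd X$.

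\emph{Step 1 (neighbourhoods meet each orbit at most once).} Since $K_\a\unlhd X_\a$ and $X_\a$ is primitive on $N(\a)$, the orbits of $K_\a$ on $N(\a)$ form a block system for $X_\a$. Hence $K_\a$ is either trivial or transitive on $N(\a)$. Similarly, the partition of $N(\a)$ induced by the $K$-orbits is $X_\a$-invariant, so it is trivial or universal. Suppose $N(\a)$ lay inside a single $K$-orbit $C$. By vertex-transitivity and connectivity, every vertex of $C$ would have all its neighbours in $B$, and every vertex of $B$ would have all its neighbours in $C$. Then $B\cup C$ would be a connected component, so $\Ga$ would have at most two $K$-orbits, a contradiction. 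Thus the neighbours of $\a$ lie in distinct $K$-orbits, and in particular no neighbour of $\a$ lies in $B$.

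\emph{Step 2 (semiregularity).} Let $\b\in N(\a)$ with $\b^K=C$. Each $k\in K_\a$ fixes $C$ setwise and permutes $N(\a)$. Since $\b$ is the only neighbour of $\a$ in $C$, we get $\b^k=\b$, so $K_\a\leqslant K_\b$. By connectivity, $K_\a$ fixes every vertex, hence $K_\a=1$ because $X$ acts faithfully. The quotient map therefore restricts to a bijection $N(\a)\to\Ga_K(B)$. Consequently $\Ga$ is a $K$-cover of $\Ga_K$. The induced action of $X/K$ on $\Ga_K$ is faithful, because an element fixing every $K$-orbit fixes, at each vertex, all of its neighbours by the uniqueness above, and then by connectivity lies in $K$. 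This gives (1); local primitivity of $\Ga_K$ follows once (3) is established.

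\emph{Step 3 (stabilizers and $s$-arcs).} Since $K$ is regular on $B$, we have $X_B=KX_\a$ with $K\cap X_\a=1$, so $(X/K)_B\cong X_\a$. This gives (3), and the action of $X_\a$ on $N(\a)$ matches that of $(X/K)_B$ on $\Ga_K(B)$, so $\Ga_K$ is locally primitive. For (2), the covering property lifts each $s$-arc of $\Ga_K$ starting at $B$ uniquely to an $s$-arc of $\Ga$ starting at $\a$, compatibly with the two actions. Hence $X_\a$ is transitive on $s$-arcs from $\a$ if and only if $(X/K)_B$ is transitive on $s$-arcs from $B$, for every $s$. The restriction $1\leqslant s\leqslant5$ or $s=7$ is then just the range in which such transitivity can occur.

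The main obstacle is Step 1, that is, ruling out $K$ being transitive on a neighbourhood. This is exactly where the hypothesis of at least three orbits and connectivity are used, and everything after it is routine.
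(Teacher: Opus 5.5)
Your proof is correct: it is the standard Praeger-style normal-quotient argument behind \cite[Lemma 2.5]{LP2008} and \cite[Theorem 4.1]{P1993}, and the paper gives no proof of its own, only that citation. Two steps each need one more line: ``no neighbour of $\a$ lies in $B$'' follows because $X_\a$ fixes $B$ setwise and is transitive on $N(\a)$, so one neighbour in $B$ would force $N(\a)\subseteq B$, which is your excluded case $C=B$; and for faithfulness of $X/K$ you must first replace $g$ by $gk^{-1}$, where $k\in K$ satisfies $\a^g=\a^k$ (an element fixing every $K$-orbit need not fix any vertex), and only then propagate fixed points along edges to conclude $gk^{-1}=1$.
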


Let $X$ be a group. Recall that $X$ is called an \textit{extension} of $K$ by $G$, denoted by $X=K.G$, if $K$ is a normal subgroup of $X$ such that the quotient group $X/K$ is isomorphic to $G$. 
Furthermore, $X=K.G$ is called a \textit{central extension} if $K\leqslant Z(X)$ and a \textit{proper central extension} if $K\leqslant Z(X)\cap X'$. 
A group $X$ is said to be \textit{perfect} if $X=X'$, the commutator subgroup. 
For a given group $G$, if $X=K.G$ is perfect and maximal order central extension of $G$, then $K$ is called the \textit{Schur multiplier} of $G$, denoted by ${\rm Mult}(G)$. 
The Schur multipliers of all finite simple groups are explicitly known, see \cite[Page 302]{G1982}. 
The following proposition is known.

\begin{proposition}\cite[Lemma 2.5]{PHD2017}\label{simple}
Assume that $X=K.T$, where $K$ is a cyclic group, and $T$ is a nonabelian simple group. 
Then $X=K.T$ is a central extension, $X=KX'$, and $X'=(K\cap X').T$ is a perfect group, where $K\cap X'$ is isomorphic to a subgroup of the Schur multiplier of $T$.
\end{proposition}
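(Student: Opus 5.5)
Since $K$ is cyclic, $\Aut(K)$ is abelian, and I will use this to force $K$ to be central. The three assertions will then follow from elementary commutator calculations, together with the standard universal-central-extension property of perfect groups.

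\emph{Step 1: $K$ is central.} Because $K\unlhd X$, conjugation gives a homomorphism $X\to\Aut(K)$ with kernel $C_X(K)$. Since $K$ is abelian, $K\leqslant C_X(K)$. Hence $X/C_X(K)$ is a quotient of $X/K\cong T$. It is also isomorphic to a subgroup of the abelian group $\Aut(K)$. The simple group $T$ is nonabelian, so its only abelian quotient is the trivial one. Therefore $C_X(K)=X$, that is, $K\leqslant Z(X)$, and $X=K.T$ is a central extension.

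\emph{Step 2: $X=KX'$.} Under the projection $X\to X/K$, the image of $X'$ is $(X/K)'=T'=T$, because $T$ is nonabelian simple. Hence $X'K/K=X/K$, and so $X=KX'$. The same argument applied to $X''$ gives $X''K/K=T''=T$, so $X=KX''$ as well.

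\emph{Step 3: $X'$ is perfect and has the stated structure.} Since $K$ is central, commutators ignore $K$-factors. So from $X=X''K$ we get
\[
X'=[X''K,X''K]=[X'',X'']=X'''\leqslant X''\leqslant X'.
\]
Hence $X'=X''$, so $X'$ is perfect. Put $K_0=K\cap X'$. Then $K_0\unlhd X'$, and
\[
X'/K_0\cong X'K/K=X/K\cong T.
\]
Thus $X'=(K_0).T$. Moreover $K_0\leqslant K\leqslant Z(X)$, so this is a central extension of $T$ by $K_0$ with $X'$ perfect.

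\emph{Step 4: comparison with the Schur multiplier.} The main point to get right is that a perfect central extension of $T$ is a quotient of the universal cover $\widehat T={\rm Mult}(T).T$. Therefore $K_0$ is a quotient of ${\rm Mult}(T)$. Since ${\rm Mult}(T)$ is a finite abelian group, each of its quotients is isomorphic to one of its subgroups, by duality of finite abelian groups. Hence $K_0$ is isomorphic to a subgroup of ${\rm Mult}(T)$. This completes the plan; the only nontrivial input is the universal-cover property in Step 4, while Steps 1–3 are routine.
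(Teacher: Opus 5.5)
Your proof is correct. Each step holds: $K$ is central because $\Aut(K)$ is abelian (via the $N/C$ theorem); $X=KX'$ because $T=T'$; $X'$ is perfect because central factors drop out of commutators; and $K\cap X'$ is a quotient of ${\rm Mult}(T)$ via the universal cover, hence a subgroup of it by duality of finite abelian groups.

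The paper does not prove this statement itself (it cites \cite[Lemma 2.5]{PHD2017}). Your argument is the standard one, and it is the same sequence of steps the paper carries out inline for $K\cong\mz_p^2$ in Lemmas \ref{F30A2} and \ref{F102A2}, where Proposition \ref{GL} replaces the abelianness of $\Aut(K)$.
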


\begin{proposition}\cite[Page XV]{C1985}\label{schur}
The Schur multiplier ${\rm Mult}(\PSL(2,q))$ of the nonabelian simple group $\PSL(2,q)$ with $q$ an odd prime power is $\mz_2$ for $q\neq9$, and $\mz_6$ for $q=9$.
\end{proposition}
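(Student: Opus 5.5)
The plan is to compute ${\rm Mult}(T)$ for $T=\PSL(2,q)$, $q=p^f$ odd, one prime at a time. For each prime $r$, the $r$-part of ${\rm Mult}(T)=H_2(T,\mz)$ is the image of $H_2(R,\mz)$ under corestriction, where $R$ is a Sylow $r$-subgroup of $T$. Dually, it is the subgroup of stable elements of $H^2(R,\mz)$ in the Cartan--Eilenberg sense. When $R$ is abelian, stability only has to be checked against $N_T(R)$ (Swan's theorem). So the work is: go through the primes dividing $|T|=q(q^2-1)/2$ and determine, for each, the $N_T(R)$-coinvariants of $H_2(R,\mz)$.

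First consider odd primes $r\ne p$. Every such $r$ divides exactly one of $(q\pm1)/2$, and the Sylow $r$-subgroup lies in a cyclic torus of order $(q\pm1)/2$. A cyclic group has trivial Schur multiplier, so the $r$-part vanishes.

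Next consider $r=2$. The Sylow $2$-subgroup $R$ is dihedral (or a Klein four-group), so $H_2(R,\mz)\cong\mz_2$ and the $2$-part is at most $\mz_2$. Equality is witnessed by the extension $\SL(2,q)\to T$. It is central, and it is non-split because $-I$ is the unique involution of $\SL(2,q)$, whose Sylow $2$-subgroups are generalized quaternion. Moreover $\SL(2,q)$ is perfect for $q\geqslant5$, so the extension is a proper central extension. Hence the $2$-part is exactly $\mz_2$.

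The main obstacle is the defining characteristic $r=p$. Here $R=U\cong(\mathbb F_q,+)\cong\mz_p^f$, so $H_2(U,\mz)\cong\Lambda^2_{\mathbb F_p}(\mathbb F_q)$. The normalizer $N_T(U)$ is the Borel subgroup, and the torus element ${\rm diag}(\lambda,\lambda^{-1})$ acts on $U$ by multiplication by $\lambda^2$. The $p$-part of the multiplier is therefore a quotient of the coinvariants of $\Lambda^2_{\mathbb F_p}(\mathbb F_q)$ under the group $S$ of nonzero squares of $\mathbb F_q$. To compute these coinvariants:
\begin{itemize}
\item Extend scalars to $\overline{\mathbb F}_p$, so that $\mathbb F_q\otimes\overline{\mathbb F}_p=\bigoplus_{i}\chi^{p^i}$, where $\chi$ is the natural character of $\mathbb F_q^*$ and $0\leqslant i<f$.
\item Then $\Lambda^2$ splits into the characters $\chi^{p^i+p^j}$ with $i<j$.
\item A coinvariant survives exactly when one of these characters is trivial on $S$, that is, when $(q-1)/2$ divides $p^i+p^j$ for some $i<j$.
\end{itemize}
A size comparison ($p^i+p^j\leqslant p^{f-2}+p^{f-1}<(q-1)/2$ except in tiny cases) shows this happens only for $q=9$, with $i=0$, $j=1$ and $p^i+p^j=4=(q-1)/2$. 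It gives a one-dimensional space, so the $3$-part is at most $\mz_3$ when $q=9$ and trivial otherwise. For $q=p$ the group $U$ is cyclic, so triviality is immediate there.

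It remains to show that the $3$-part for $q=9$ is exactly $\mz_3$. Using $\PSL(2,9)\cong\A_6$, I would exhibit the Valentiner triple cover $3.\A_6\leqslant\PGL(3,4)$, or equivalently cite the perfect central extension $6.\A_6$. Assembling the primes gives ${\rm Mult}(\PSL(2,q))\cong\mz_2$ for $q\neq9$ and $\mz_2\times\mz_3\cong\mz_6$ for $q=9$. Alternatively, the $p$-part step can be replaced by Steinberg's theorem that the simply connected group $\SL(2,q)$ has trivial multiplier except for $q=4,9$. That still requires the $q=9$ exception to be checked by hand, as above.
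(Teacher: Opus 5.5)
Your proposal is correct, and it takes a genuinely different route from the paper, which gives no argument at all. Proposition~\ref{schur} is quoted from the ATLAS and used only as a black box in the central-extension arguments of Sections 3 and 4.

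You instead compute $H_2(\PSL(2,q),\mz)$ one prime at a time:
\begin{itemize}
\item Odd primes $r\neq p$: the Sylow $r$-subgroups are cyclic, so this part vanishes.
\item $r=2$: the dihedral Sylow $2$-subgroup bounds the $2$-part by $\mz_2$, and the non-split perfect cover $\SL(2,q)$ attains it.
\item $r=p$: the $p$-part is a quotient of the coinvariants of $\Lambda^2_{\mathbb{F}_p}(\mathbb{F}_q)$ under the group $S$ of nonzero squares. These are nonzero only if $(q-1)/2$ divides some $p^i+p^j$ with $0\leqslant i<j<f$.
\end{itemize}
The weight computation and the size comparison check out. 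For $p\geqslant5$ one has $2p^{f-2}(p+1)<p^f-1$. For $p=3$, the inequality $p^{f-1}+p^{f-2}\geqslant(q-1)/2$ holds only for $f=2$, where it is an equality. So $q=9$ with $(i,j)=(0,1)$ is the unique survivor, giving at most $\mz_3$.

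You only use the upper bound that the $r$-part is a quotient of $H_2(R,\mz)_{N_T(R)}$. This is elementary: corestriction maps onto the $r$-part and is invariant under conjugation by $N_T(R)$. So Swan's theorem is more than you need. Your route explains why $q=9$ is exceptional and would make the paper self-contained at this point. The citation buys brevity, which is all the paper needs.

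Two slips need fixing.
\begin{itemize}
\item The dual of $H_2(R,\mz)$ is $H^2(R,\mathbb{C}^\times)\cong H^3(R,\mz)$, not $H^2(R,\mz)$. The latter is dual to $R/R'$, so the stable elements must be taken in $H^2(R,\mathbb{C}^\times)$.
\item $3.\A_6$ is not a subgroup of $\PGL(3,4)$. Being perfect, it would lie in $\PSL(3,4)$, but $27\nmid|\PSL(3,4)|=20160$. The triple cover is the preimage in $\SL(3,4)$ of an $\A_6<\PSL(3,4)$ (or the Valentiner group in $\SL(3,\mathbb{C})$). To know this preimage is non-split, hence perfect, you still need that $\A_6$ has no faithful $3$-dimensional representation over $\mathbb{F}_4$ (respectively over $\mathbb{C}$).
\end{itemize}
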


The next proposition is the characterization of subgroups of $\GL(2,p)$.
\begin{proposition}\label{GL}\cite[Lemma 2.7]{DMW1998}
The general linear group $\GL(2,p)$ with $p$ a prime contains neither a nonabelian simple subgroup nor a subgroup isomorphic to $\A_4$.
\end{proposition}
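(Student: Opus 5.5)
The plan is to reduce everything to the special linear group and use the fact that, for $p$ odd, $\SL(2,p)$ has a unique involution. This one fact handles both assertions.

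First dispose of $p=2$. Here $\GL(2,2)\cong\Sy_3$ has order $6$. It therefore contains no nonabelian simple group (these have order at least $60$) and no copy of $\A_4$ (order $12$). From now on let $p$ be odd.

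\emph{Step 1: the unique involution.} Let $x\in\SL(2,p)$ with $x^2=1$. Since $p$ is odd, the polynomial $t^2-1$ has distinct roots, so $x$ is diagonalizable over $\mathbb{F}_p$ with eigenvalues in $\{\pm1\}$. Because $\det x=1$, the two eigenvalues are equal. Hence $x=\pm I$, and $-I$ is the only involution of $\SL(2,p)$. Moreover $-I$ is central in $\GL(2,p)$.

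\emph{Step 2: reduction to $\SL(2,p)$.} Let $H\leqslant\GL(2,p)$. The determinant map restricted to $H$ has abelian image, so $H'\leqslant H\cap\SL(2,p)$.
\begin{itemize}
\item If $H=T$ is nonabelian simple, then $T=T'\leqslant\SL(2,p)$. The order of $T$ is even: by Feit--Thompson, or more elementarily because a group with a cyclic Sylow $2$-subgroup has a normal $2$-complement by Burnside's transfer theorem. So $T$ contains an involution, which by Step 1 must be $-I$. But $-I$ is central, so $\langle -I\rangle$ is a nontrivial proper normal subgroup of $T$, contradicting simplicity.
\item If $H\cong\A_4$, then $H'\cong\mz_2^2$ lies in $\SL(2,p)$ and contains three involutions. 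This contradicts Step 1.
\end{itemize}

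The only point needing care is Step 1, namely the diagonalizability argument for odd $p$. The even-order fact used in Step 2 is standard and will simply be quoted.
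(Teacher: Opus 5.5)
The paper gives no proof of this statement; it simply quotes \cite[Lemma 2.7]{DMW1998}. Your argument is a correct self-contained substitute. Its engine is the fact that for odd $p$ the only involution of $\SL(2,p)$ is the central element $-I$. The paper itself borrows exactly this fact from Suzuki to exclude $\A_4$, $\Sy_4$ and $\mz_2^2$ from $\SL(2,9)$ and $\SL(2,17)$ in Lemmas~\ref{F30A}, \ref{F102A}, \ref{F30A2} and~\ref{F102A2}.

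What your version adds is the reduction $H'\leqslant H\cap\SL(2,p)$, which transfers the fact to all of $\GL(2,p)$. The argument works verbatim for $\GL(2,q)$ with any odd prime power $q$. Your separate order argument at $p=2$ is genuinely needed, since $\GL(2,4)\geqslant\SL(2,4)\cong\A_5$.

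One correction. Your ``more elementary'' reason why $|T|$ is even is not valid. Burnside's normal-complement theorem for a cyclic Sylow $2$-subgroup only shows that a nonabelian simple group cannot have a \emph{nontrivial} cyclic Sylow $2$-subgroup. If the Sylow $2$-subgroup is trivial, the normal $2$-complement is $T$ itself and nothing follows. So, as written, evenness rests on Feit--Thompson, which is legitimate but heavy.

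If you want to avoid Feit--Thompson, use the structure of $\SL(2,p)$. An odd-order $T\leqslant\SL(2,p)$ meets $\{\pm I\}$ trivially, so it embeds in $\PSL(2,p)$. By Dickson's list, the odd-order subgroups of $\PSL(2,p)$ are cyclic or of the form $\mz_p\rtimes\mz_d$, so $T$ would be solvable.

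Alternatively, without Dickson, split on whether $p$ divides $|T|$:
\begin{itemize}
\item If $p\mid|T|$, either a Sylow $p$-subgroup is normal in $T$, forcing $T\cong\mz_p$, or $T$ contains two distinct transvection groups. These generate $\SL(2,p)$, forcing $T=\SL(2,p)$, which is not simple.
\item If $p\nmid|T|$, use Maschke's theorem and the fact that absolutely irreducible degrees divide $|T|$. Since $|T|$ is odd, no degree-$2$ constituent occurs, so $T$ is diagonalizable over $\overline{\mathbb{F}}_p$ and hence abelian.
\end{itemize}
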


\section{Cyclic Cover}\label{sec3}

In this section, let $\Ga$ be a connected $X$-arc-transitive cyclic cover of the edge-primitive cubic graph $\Sigma$, where $X$ is the fibre-preserving group and the covering transformation group is $K\cong\mz_n$ for some integer $n\geqslant2$.
Since $X$ is the fibre-preserving group, we have $K\unlhd X$. Let $Y=X/K$.
Then $Y\leqslant\Aut(\Sigma)$, $X=K.Y$ and $X_\a\cong Y_v$ for any $\a\in V(\Ga)$ and $v\in V(\Sigma)$.
By \cite[Lemma 2.9]{W1973}, $\Sigma\cong {\rm K_{3,3}},\,{\rm DC_{14}},\,{\rm F30A}$ or ${\rm F102A}$, all of which are vertex-transitive locally primitive.
Since cyclic covers of ${\rm K_{3,3}}$ and ${\rm DC_{14}}$ have been studied in \cite[Theorem 1.1]{FK20042} and \cite[Theorem 1.1]{WH2010}, it suffices to focus on cyclic covers of the remaining two edge-primitive cubic graphs. We deal it with the following two lemmas.

\begin{lemma}\label{F30A}
Suppose that $\Sigma\cong {\rm F30A}$. Then $K\cong\mz_3$ and $\Ga\cong {\rm F90A}$.
\end{lemma}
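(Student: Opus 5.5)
The plan is to pin down the arc-transitive group $Y=X/K\leqslant\Aut(\Sigma)\cong\PGammaL(2,9)$. I then lift its socle $\A_6$ to $X$ using Propositions~\ref{simple} and~\ref{schur}, and use the orbit structure together with Proposition~\ref{NQ} to force $n$ to equal the order of a central subgroup of a perfect cover of $\A_6$. Finally, an involution-count argument on the vertex stabiliser excludes every case except $n=3$.

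\textbf{Step 1: $Y$ contains $\A_6$.} Since $Y$ is arc-transitive on ${\rm F30A}$, $90$ divides $|Y|$. I will check the subgroups of $\PGammaL(2,9)$ not containing $T=\mathrm{soc}(\PGammaL(2,9))\cong\A_6$. These lie in maximal subgroups whose orders ($144$, $40$, and those of $2$-groups, etc.) are not divisible by $90$. Hence $T\leqslant Y$, i.e.\ $T\unlhd Y$. Moreover $T$, and indeed $\Sy_6$, preserves the bipartition of ${\rm F30A}$ (duads/synthemes). So $T$ has exactly two orbits on $V(\Sigma)$, with vertex stabiliser $T_v\cong\Sy_4$. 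Let $R\unlhd X$ be the full preimage of $T$, so $R=K.\A_6$.

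\textbf{Step 2: the structure of $R$.} By Proposition~\ref{simple}, $R$ is a central extension, $R=KC$ with $C=R'$ perfect, and $C=M.\A_6$ with $M=K\cap C$. By Proposition~\ref{schur}, $M$ is isomorphic to a subgroup of $\mz_6$. Since $C$ is characteristic in $R\unlhd X$, we have $C\unlhd X$. The orbits of $KC=R$ are the full preimages of the two parts of $\Sigma$, each of size $15n$. Because $K$ is semiregular and $C_\a K/K\cong T_v$, the vertex stabiliser $C_\a$ maps isomorphically onto $\Sy_4$ and $C_\a\cap K=1$. Thus each $C$-orbit has size $|C|/24=15|M|$, so $C$ has exactly $2n/|M|$ orbits.

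If $n>|M|$, then $C$ has at least $4$ orbits, and Proposition~\ref{NQ}(1) makes $C$ semiregular. This contradicts $C_\a\cong\Sy_4\neq1$. Hence $n=|M|\in\{2,3,6\}$ (note $n\geqslant2$), $K=M\leqslant C$, and $C\cong n.\A_6$ is a perfect central extension containing a subgroup $C_\a\cong\Sy_4$ that meets the centre trivially.

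\textbf{Step 3: excluding $n=2$ and $n=6$.} This is the main obstacle, since it needs concrete knowledge of the covering groups. In $2.\A_6\cong\SL(2,9)$ the unique involution is central. In $6.\A_6$ the Sylow $2$-subgroup is that of $2.\A_6$, which is generalized quaternion, so again the unique involution is central. But $\Sy_4$ contains several involutions, and any subgroup meeting the centre trivially could not contain the central involution. So it cannot embed in $C$ while meeting the centre trivially. Therefore $n=3$ and $K\cong\mz_3$.

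\textbf{Step 4: identification.} Now $\Ga$ is a connected arc-transitive cubic graph of order $90$. By the census \cite{CP2026} it is unique, namely ${\rm F90A}$, which by Example~\ref{90V} is indeed a $\mz_3$-cover of ${\rm F30A}$. For existence in this step I would also verify that $3.\A_6$ contains $\Sy_4$ complementing the centre, which is consistent with Example~\ref{90V}.
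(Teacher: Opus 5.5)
There is a genuine gap in Step~2. Your overall plan is sound, and cleaner than the paper's: passing to the preimage $R$ of the socle $T\cong\A_6$ handles $Y\in\{\PGL(2,9),\M_{10}\}$ and $Y\cong\PGammaL(2,9)$ at once, and the unique-central-involution argument correctly disposes of $2.\A_6$ and $6.\A_6$.

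The gap is the claim $C_\a K/K\cong T_v$, i.e.\ $C_\a\cong\Sy_4$, which you assert without proof. Stabilisers do surject for $R$, because $K\leqslant R$ is transitive on each fibre, and this gives $R_\a\cong T_v\cong\Sy_4$. But $C=R'$ contains only $M=K\cap C$. What you can actually deduce is that $R_\a/C_\a$ embeds in the cyclic group $R/C\cong K/M$. So $C_\a\cong\A_4$ or $\Sy_4$, and $C_\a\cong\A_4$ is not excluded when $|K:M|$ is even.

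In that case each $C$-orbit has length $|C|/12=30|M|$, so $C$ has $n/|M|$ orbits. This number is at least $2$ (as $C\leqslant R$) and at most $2$ (Proposition~\ref{NQ}), giving $n=2|M|$ rather than $n=|M|$. Your Step~3 argument still removes $|M|\in\{2,6\}$, since $\A_4$ contains involutions. But two cases survive and your proof never addresses them:
\begin{itemize}
\item $n=2$ with $C\cong\A_6$, so $|V(\Ga)|=60$;
\item $n=6$ with $C\cong3.\A_6$, so $|V(\Ga)|=180$.
\end{itemize}
These are exactly the cases the paper tracks with its parameter $m\in\{1,2\}$ and then eliminates using the Foster census.

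You can close the gap without the census.
\begin{itemize}
\item $C_\a$ acts on $\Ga(\a)$ as its image in $T_v$ acts on $\Sigma(v)$. Since $T_v\cong\Sy_4$ induces $\Sy_3$ there, a subgroup $\A_4$ induces $\mz_3$. So $C$ is locally transitive, hence edge-transitive.
\item For $\b\in\Ga(\a)$, $C_{\a\b}$ has index $3$ in $C_\a\cong\A_4$, so $C_{\a\b}=O_2(C_\a)$. Since $C\unlhd X$ and $X$ is vertex-transitive, $C_\b$ is $X$-conjugate to $C_\a$, and likewise $C_{\a\b}=O_2(C_\b)$.
\item Hence $C_{\a\b}\unlhd\l C_\a,C_\b\r=C$, using edge-transitivity and connectivity. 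So $C_{\a\b}\cong\mz_2^2$ fixes $\a^C\cup\b^C=V(\Ga)$, contradicting faithfulness.
\end{itemize}
Thus $C_\a\cong\Sy_4$, and the rest of your count goes through.

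A smaller point in Step~1: a subgroup of $\PGammaL(2,9)$ not containing $T$ need not lie in a maximal subgroup avoiding $T$ (e.g.\ $\Sy_5<\Sy_6$). Argue instead that $|Y:Y\cap T|$ divides $4$, so $45\mid|Y\cap T|$, and $\A_6$ has no proper subgroup of order divisible by $45$.
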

\pf\, It is well known that ${\rm F30A}$ is a bipartite graph and $\Sigma$ is 5-arc-regular under $\Aut(\Sigma)\cong\PGammaL(2, 9)$ (see \cite[Web]{CP2026} for example). 
Firstly, since $\Ga$ is a connected $X$-arc-transitive cubic graph and $|V(\Sigma)|=30$, we conclude that $\Ga$ is a $X$-vertex-transitive locally-primitive graph and $K$ has 30 orbits on $V(\Ga)$ respectively. 
By Proposition \ref{NQ}(2), $\Ga$ is $(X, s)$-arc-transitive if and only if $\Sigma$ is $(X/K, s)$-arc-transitive, where positive integer $s\leqslant5$, $K\unlhd X$ and $K$ has at least three orbits on $V(\Ga)$. 
Let $Y^+$ be the subgroup of $Y$ that fixes each part of the bipartition of $V(\Sigma)$ and $X^+=K.Y^+$. 
Since $\Ga$ is $X$-arc-transitive, we get that $\Sigma$ is $Y$-arc-transitive, which means that $90\mid |Y|$ and $Y^+$ acts transitively on each partition. 
Thus $|X:X^+|=|Y:Y^+|=2$, $X_\a=X^+_\a$ for $\a\in V(\Ga)$, and $Y_v=Y^+_v$ for $v\in V(\Sigma)$. 
Noting that $Y\leqslant\Aut(\Sigma)\cong\PGammaL(2, 9)$, it follows that $Y\in\{\PSL(2, 9),\,\PGL(2, 9),\,\M_{10},\,\Sy_6,\,\PGammaL(2, 9)\}$. Obviously, $Y\not\cong\PSL(2,9)$ as $|Y:Y^+|=2$. 
By \cite[Page 4]{C1985}, $\Sy_6$ has two orbits of length $15$ on $V(\Sigma)$. Thus $\Sy_6$ is not vertex-transitive on $\Sigma$, and so $Y\not\cong\Sy_6$.
Thus, $Y\in\{\PGL(2, 9),\,\M_{10},\,\PGammaL(2, 9)\}$. Next, we will discuss the two cases separately.

{\it Case 1.} If $Y\cong\PGL(2, 9)$ or $\M_{10}$, then $\Ga\cong{\rm F90A}$.

In this case, $|Y|=720$. The transitivity of $Y$ on $V(\Sigma)$, together with $|Y|=720$ and $|V(\Sigma)|=30$, gives $|Y_v|=24$. The classification in \cite{DM1980} shows that $\Sigma$ is $(Y,4)$-arc-regular, with $Y_v\cong\Sy_4$. By \cite[Page 4]{C1985}, $Y^+\cong\PSL(2,9)\cong\A_6$. Hence $X^+=K.Y^+\cong K.\PSL(2,9)$ and $X_\a=X^+_\a\cong\Sy_4$.

Since $\Ga$ is $X$-vertex-transitive and locally primitive, Proposition \ref{NQ} gives that $\Ga$ is $(X,4)$-arc-transitive.
By Propositions \ref{simple} and \ref{schur}, we get $X^+=K.Y^+\cong\mz_n.\PSL(2, 9)$ is a central extension, $X^+=K(X^+)'$, and $(X^+)'=(K\cap (X^+)').\PSL(2,9)$ is perfect, where $K\cap (X^+)'\lesssim {\rm Mult}(\PSL(2, 9))\cong\mz_6$. 
It is easy for us to get that $(X^+)'\in\{\PSL(2, 9),\,\SL(2, 9),\,\mz_3.\PSL(2, 9),\,\mz_6.\PSL(2, 9)\}$.
Since $(X^+)'\unlhd X^+$, we have $(X^+)'_\a=(X^+)'\cap X^+_\a\unlhd X^+_\a$ and so $X^+_\a/(X^+)'_\a\cong X^+_\a(X^+)'/(X^+)'\leqslant X^+/(X^+)'=K(X^+)'/(X^+)'\cong K/(K \cap (X^+)')$. 
Noting that $K$ is cyclic and $X_\a^+\cong\S_4$, we conclude that $(X^+)'_\a\cong\A_4$ or $\Sy_4$. 
By \cite[Page~394, 6.3 (ii)]{S1982}, $\SL(2,9)\cong\mz_2.\PSL(2,9)$ has a unique involution, so it contains no subgroup isomorphic to $\A_4$. 

Assume that $\A_4\leqslant\mz_6.\PSL(2,9)$. Since $\mz_3\leqslant Z(\mz_6.\PSL(2,9))$, we have $\A_4\cap\mz_3\leqslant Z(\A_4)=1$. Hence $\A_4\cong\A_4\mz_3/\mz_3\leqslant(\mz_6.\PSL(2,9))/\mz_3\cong\mz_2.\PSL(2,9)\cong\SL(2,9)$, a contradiction.
Then $(X^+)'\in\{\PSL(2,9),\,\mz_3.\PSL(2,9)\}$.
Since $(X^+)'\char X^+\unlhd X$, we have $(X^+)'\unlhd X$ and so all orbits of $(X^+)'$ on $V(\Ga)$ have the same length. Noting that $(X^+)'$ is not transitive on $V(\Ga)$, we deduce from the Proposition \ref{NQ} that if $(X^+)'$ has at least three orbits on $V(\Ga)$, then $(X^+)'$ is semiregular on $V(\Ga)$. However, since $(X^+)'_\a\neq 1$, we conclude that $(X^+)'$ has exactly two orbits. 

Suppose that $(X^+)'\cong\PSL(2,9)$. 
Then $|\a^{(X^+)'}|=|\PSL(2,9):(X^+)'_\a|=30/m$ where $m=1$ or 2, depending on $(X^+)'_\a\cong\A_4$ or $\Sy_4$. Noting that $|V(\Ga)|=30|K|$ and $K\neq1$, we obtain that $(X^+)'$ has $(30m|K|)/30=m|K|$ orbits on $V(\Ga)$, which implies that $m|K|=2$. Hence, $|K|=2$ and so $|V(\Ga)|=60$. 
The Foster census \cite{CP2026} contains no $4$-arc-transitive cubic graph of order $60$, a contradiction.
Thus $(X^+)'\cong\mz_3.\PSL(2,9)$ and $|\mz_3.\PSL(2,9):(X^+)'_\a|=90/m$, where $m\in\{1,2\}$. Hence $(X^+)'$ has $m|K|/3$ orbits on $V(\Ga)$, and the equality $m|K|/3=2$ gives $|K|\in\{3,6\}$. The Foster census \cite{CP2026} contains no connected arc-transitive cubic graph of order $180$, so $|K|=3$ and $|V(\Ga)|=90$. The graph $\Ga$ is $(X,4)$-arc-transitive, and the Foster census identifies $\Ga$ with ${\rm F90A}$. This completes Case 1. 

{\it Case 2.} If $Y\cong\PGammaL(2, 9)$, then $\Ga\cong{\rm F90A}$.

Since $\Sigma$ is a $(Y, 5)$-transitive cubic graph, by \cite[Proposition 5]{DM1980}, we get that  $Y_v\cong\Sy_4\times\mz_2$, and so $Y^+_v=Y_v\cong X_\a=X^+_\a\cong\Sy_4\times\mz_2$, which implies that $\Ga$ is a connected 5-arc-transitive cubic graph. 
For $Y\cong\PGammaL(2, 9)$, we have $Y^+\cong\Sy_6,\,\PGL(2, 9)$ or $\M_{10}$. Since $|V(\Sigma)|=30$ and $|Y^+|=|v^{Y^+}||Y^+_v|=15|Y^+_v|$, $Y^+_v$ is index 15 subgroup of $Y^+$. However, there exists no subgroup of $\PGL(2, 9)$ or $\M_{10}$ with index 15 (see \cite[Page 4]{C1985} for example), which implies that $Y^+\cong\Sy_6$. Hence, $X^+=K.Y^+\cong K.\Sy_6$. 
Set $C=C_{X^+}(K)$. Then $K\leqslant C$ as $K$ is cycle.
By the $``N/C"$-theorem, $X^+/C\cong (X^+/K)/(C/K)\cong\Sy_6/(C/K)\lesssim\Aut(K)$. 
Since $\Aut(K)$ is abelian, we get that $C/K\cong\A_6$ or $\Sy_6$, which implies that  $|X^+:C|\leqslant2$.

Let $M\leqslant C$ such that $K\leqslant M$ and $M/K\cong \A_6$. We claim that $M'\in\{\A_6,\,\mz_3.\A_6\}$ and $M'$ has exactly two orbits on $V(\Ga)$. 
Since $|\a^M|=|M:M_\a|=|M|/|M_\a|$ and $|\a^{X^+}|=|X^+:X^+_\a|$, we deduce that $2=|X^+:M|=(|\a^{X^+}||X^+_\a|)/(|M_\a||\a^M|)$. 
Suppose that $|X^+_\a|=|M_\a|$. Then $2|\a^M|=|\a^{X^+}|=15|K|$. The group $M/K\cong\A_6$ is transitive on each part of $\Ga_K\cong\Sigma$, while $K$ acts regularly on each fibre of the cover. Thus $M$ is transitive on each part of $\Ga$, which gives $|\a^M|=15|K|$. This contradicts $2|\a^M|=15|K|$.
Thus, $|\a^M|=|\a^{X^+}|$ and $|X^+_\a:M_\a|=2$. 
On the other hand, since $X^+_\a\cong\Sy_4\times\mz_2$, we get that $M_\a$ is either $\Sy_4$ or $\A_4\times\mz_2$, and there exists a subgroup isomorphic to $\A_4$ in $M_\a$.
We deduce from Proposition \ref{simple} that $M=K.\A_6$ is a central extension, $M=KM'$ with $M'=(K\cap M').\A_6$ being a perfect group and $K\cap M'\lesssim {\rm Mult}(\PSL(2,9))\cong\mz_6$ (see the Proposition \ref{schur}) for example).
Hence, $M'\in\{\A_6,\,\mz_2.\A_6,\,\mz_3.\A_6,\,\mz_6.\A_6\}$.
Since $M/M'$ is abelian and $M'_\a\unlhd M_\a\cong\Sy_4$ or $\A_4\times\mz_2$, 
we get that $M'_\a\in\{\mz_2^2,\,\mz_2^3,\,\A_4,\,\A_4\times\mz_2,\,\Sy_4\}$, which implies that
there exists a subgroup isomorphic to $\mz_2\times\mz_2$ in $M'_\a$.
By \cite[Page 394, 6.3 (ii)]{S1982}, $2.\A_6\cong\SL(2,9)$ has a unique involution. Thus it contains no subgroup isomorphic to $\mz_2^2$. If $H\cong\mz_2^2$ is a subgroup of $6.\A_6$, then $H\cap\mz_3=1$. Thus $H\cong H\mz_3/\mz_3\leqslant(6.\A_6)/\mz_3\cong2.\A_6$, a contradiction. Then $M'\in\{\A_6,\,\mz_3.\A_6\}$.

The equality $|X^+:M|=2$ implies $(X^+)'\leqslant M$, so $(X^+)''\leqslant M'$. Also, $(X^+)''\char X^+\unlhd X$, which gives $(X^+)''\unlhd M'$. Suppose first that $M'\cong\A_6$. The subgroup $(X^+)''$ is unsolvable and normal in $M'$, so the simplicity of $\A_6$ forces $(X^+)''=M'$. Suppose next that $M'\cong\mz_3.\A_6$. In this case $Z(M')\cong\mz_3$ and $M'/Z(M')\cong\A_6$. 
The equality $|X^+:M|=2$ gives $(X^+)'\leqslant M$, so $(X^+)''\leqslant M'$. The relation $X^+/K\cong\Sy_6$ gives $(X^+)''K/K=(X^+/K)''\cong\A_6$. Thus $(X^+)''$ has a quotient isomorphic to $\A_6$ and is unsolvable. 
The unsolvability of $(X^+)''$ again forces $(X^+)''=M'$. Hence $M'=(X^+)''\unlhd X$, so all $M'$-orbits on $V(\Ga)$ have the same length.
Noting that $M'$ is not transitive on $V(\Ga)$ and $M'_\a\neq1$, we draw from Proposition \ref{NQ} that $M'$ has exactly two orbits. 

Suppose that $M'\cong\A_6$. The group $\A_6$ contains no subgroup isomorphic to $\mz_2^3$ or $\A_4\times\mz_2$, so $M'_\a\in\{\mz_2^2,\A_4,\Sy_4\}$. The equality $|\a^{M'}|=|M':M'_\a|$, together with the fact that $M'$ has exactly two orbits on $V(\Ga)$, gives $2=|V(\Ga)|/|\a^{M'}|=30|K||M'_\a|/360=|K||M'_\a|/12$.
Hence $|K|=6$, $2$ or $1$ according as $M'_\a\cong\mz_2^2$, $\A_4$ or $\Sy_4$. The case $|K|=1$ is impossible, leaving $|V(\Ga)|=180$ or $60$. The Foster census \cite{CP2026} contains no $5$-arc-transitive cubic graph of either order, a contradiction. Hence $M'\ncong\A_6$, and so $M'\cong\mz_3.\A_6$.

Let $H$ be a $2$-subgroup of $M'$. Since $\mz_3\leqslant Z(M')$, we have $H\cap\mz_3=1$, and hence $H\cong H\mz_3/\mz_3\leqslant M'/\mz_3\cong\A_6$.
A Sylow $2$-subgroup of $\A_6$ is dihedral of order $8$, so $M'$ contains no subgroup isomorphic to $\mz_2^3$. The group $\A_4\times\mz_2$ cannot occur either, since its Sylow $2$-subgroup is isomorphic to $\mz_2^3$. Thus $M'_\a\in\{\mz_2^2,\A_4,\Sy_4\}$. Using $|\a^{M'}|=|M':M'_\a|$ and the fact that $M'$ has exactly two orbits on $V(\Ga)$, we have $2=|V(\Ga)|/|\a^{M'}|=30|K|/|M':M'_\a|=|K||M'_\a|/36$.
Hence $|K|=18$, $6$ or $3$, and $|V(\Ga)|=540$, $180$ or $90$, according as $M'_\a\cong\mz_2^2$, $\A_4$ or $\Sy_4$.
The Foster census \cite{CP2026} contains no $5$-arc-transitive cubic graph of order $540$ or 180.
Hence $|K|=3$ and $|V(\Ga)|=90$, and the Foster census identifies $\Ga$ with ${\rm F90A}$. \qed

\begin{lemma}\label{F102A}
Suppose that $\Sigma\cong {\rm F102A}$. Then $K\cong\mz_2$, and $\Ga\cong {\rm F204A}$. 
Moreover, $X\cong\mz_2\times\PSL(2, 17)$ and $\Ga$ is a $(X,4)$-arc-transitive graph.
\end{lemma}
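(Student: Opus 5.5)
We follow the pattern of Lemma~\ref{F30A}, but the situation is simpler because $\Aut(\Sigma)\cong\PSL(2,17)$ is itself simple. Since $\Sigma={\rm F102A}$ has $102$ vertices and $X$ is arc-transitive on $\Ga$, the quotient $Y=X/K$ is arc-transitive on $\Sigma$, so $306\mid |Y|$. The maximal subgroups of $\PSL(2,17)$ are $\D_{16}$, $\D_{18}$, $\Sy_4$ and $\mz_{17}\rtimes\mz_8$, and none of them has order divisible by $306=2\cdot 9\cdot 17$. Hence $Y=\PSL(2,17)$. It follows that $Y_v\cong\Sy_4$ (order $2448/102=24$), so $\Sigma$ is $(Y,4)$-arc-regular. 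By Proposition~\ref{NQ}(2),(3), $X_\a\cong\Sy_4$ and $\Ga$ is $(X,4)$-arc-transitive. This gives the last assertion once $X$ is identified.

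Next we apply Proposition~\ref{simple} with $T=\PSL(2,17)$. The extension $X=K.T$ is central, $X=KX'$, and $X'=(K\cap X').T$ with $K\cap X'$ isomorphic to a subgroup of ${\rm Mult}(\PSL(2,17))\cong\mz_2$ by Proposition~\ref{schur}. Thus $X'\cong\PSL(2,17)$ or $\SL(2,17)$. Since $X/X'$ is a quotient of $K$ and hence cyclic, we have $X_\a/X'_\a$ cyclic. Because $X_\a\cong\Sy_4$, this forces $X'_\a\cong\A_4$ or $\Sy_4$. But $\SL(2,17)$ has a unique involution and so contains no $\A_4$ (cf.\ the argument in Lemma~\ref{F30A}). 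Hence $X'\cong\PSL(2,17)$, $X'\cap K=1$, and $X=K\times X'\cong\mz_n\times\PSL(2,17)$.

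Now we count orbits. The group $X'$ is characteristic in $X$, hence normal, and $X'_\a\ne1$. By Proposition~\ref{NQ}, $X'$ cannot have three or more orbits, since then it would be semiregular. It is also not transitive, because $|X'|/|X'_\a|\le 2448/12=204$ while $|V(\Ga)|=102n$. So it has one or two orbits. If it has one orbit, then $102n=2448/|X'_\a|\in\{102,204\}$; since $n\ge2$ this means $n=2$. If it has two orbits, then $102n=2\cdot 2448/|X'_\a|\in\{204,408\}$, giving $n\in\{2,4\}$.

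The main obstacle is excluding $n=4$ (with $X'_\a\cong\A_4$, two orbits). We would try first to use the Foster census~\cite{CP2026}: it should contain no $4$-arc-transitive cubic graph of order $408$, just as such orders were excluded in Lemma~\ref{F30A}. As a structural alternative, the two $X'$-orbits would form a bipartition preserved by $X$. Then $X^+=K'\times X'$ with $K'\cong\mz_2$ the index-$2$ subgroup of $K\cong\mz_4$, and $X^+$ would be transitive on each part of size $204$ with stabilizer $\Sy_4$. One would then need to check that this configuration collapses, for instance that $\Ga$ would be the standard double cover of a $\mz_2$-cover of $\Sigma$, which is impossible. Once $n=2$ is established, $|V(\Ga)|=204$. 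The census contains a unique arc-transitive cubic graph of order $204$ covering ${\rm F102A}$, which is ${\rm F204A}$ by Example~\ref{204V}. This gives $\Ga\cong{\rm F204A}$ and $X\cong\mz_2\times\PSL(2,17)$.
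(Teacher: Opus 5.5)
Your proposal is correct and follows the paper's proof step for step: $Y=\PSL(2,17)$ from the maximal subgroups, $X=K\times\PSL(2,17)$ via Propositions~\ref{simple} and \ref{schur} together with the unique involution of $\SL(2,17)$, the orbit count of $X'$ giving $|K|\in\{2,4\}$, and the Foster census both to rule out order $408$ and to identify ${\rm F204A}$. One small slip: your claim that $X'$ is not transitive is unjustified, since for $n=2$ and $X'_\a\cong\A_4$ the orbit length is $204=|V(\Ga)|$; it does no harm because you go on to treat the one-orbit case anyway.
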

\pf\, Since $\Ga$ is a connected $X$-arc-transitive cubic graph, it is also $X$-vertex-transitive locally-primitive. By Proposition \ref{NQ} (2), for a positive integer $s\leqslant5$, $\Ga$ is $(X,s)$-arc-transitive if and only if $\Sigma$ is $(Y,s)$-arc-transitive. Recall that $\Sigma\cong{\rm F102A}$ is the Biggs--Smith graph, which is $4$-arc-regular and has full automorphism group $\Aut(\Sigma)\cong\PSL(2,17)$; see \cite[Web]{CP2026}. Thus $Y\leqslant\Aut(\Sigma)\cong\PSL(2,17)$. The $Y$-arc-transitivity of $\Sigma$ gives $306\mid|Y|$. The list of maximal subgroups of $\PSL(2,17)$ in \cite[Page 9]{C1985} then gives $Y=\Aut(\Sigma)\cong\PSL(2,17)$.
Since ${\rm F102A}$ is a $(Y,4)$-arc-regular cubic graph,
applying Proposition \ref{NQ} (2) again, we conclude that $\Ga$ is $(X,4)$-arc-regular, and for $v\in V(\Sigma)$ and $\a\in V(\Ga)$, we conclude that $Y_v\cong X_\a\cong\Sy_4$. 

Since $X=K.\PSL(2,17)$ where $K$ is cyclic, it follows from Propositions \ref{simple} and \ref{schur} that $X=KX'$ and $X'=(K\cap X').\PSL(2, 17)$ is perfect, with $K\cap X'\lesssim {\rm Mult}(\PSL(2, 17))\cong\mz_2$. 
This implies that $X'\in\{\PSL(2, 17),\,\SL(2, 17)\}$. 
As $X'\unlhd X$, we obtain that $X'_\a\unlhd X_\a$. 
Since $\Sy_4/X'_\a\cong X_\a/X'_\a= X_\a/(X'\cap X_\a)\cong X_\a X'/X'\leqslant X/X'=KX'/X'\cong K/(K\cap X')$ is cyclic, it follows that $X'_\a\cong \A_4$ or $\Sy_4$. 
Noting that $\SL(2, 17)$ contains only one involution (see \cite[Page 394 6.3 (ii)]{S1982} for example), one yields that it cannot contain any subgroup isomorphic to $\A_4$ or $\Sy_4$. Thus, $K\cap X'=1$ and $X'\cong\PSL(2, 17)$, and so $X=K\times X'\cong K\times\PSL(2, 17)$. Since all orbits of $X'$ on $V(\Ga)$ have the same length because $X'\unlhd X$, we deduce that $|\a^{X'}|=|X':X'_\a|=|\PSL(2, 17):12m|=204/m$ for $m=1$ or 2, and then $X'$ has $\frac{|V(\Ga)|}{|\a^{X'}|}=\frac{102|K|}{204/m}=m|K|/2$ orbits. 
Since $X'_\a\neq1$, by Proposition \ref{NQ}, we get that $X'$ has at most two orbits, that is, $m|K|/2\leqslant2$ where $m=1$ or 2. Hence, $|K|=2$ or 4, and so $|V(\Ga)|=204$ or 408. 
If $|V(\Ga)|=408$, then by \cite[Web]{CP2026}, there is no 4-arc-transitive cubic graph of order 408, a contradiction. Thus, $|V(\Ga)|=204$. 

The graph $\Ga$ is $(X,4)$-arc-regular and has order $204$, so the Foster census \cite{CP2026} gives $\Ga\cong{\rm F204A}$. Also, $\Ga$ is a $K$-cover of $\Sigma\cong{\rm F102A}$, and thus $\Ga_K\cong{\rm F102A}$. Example \ref{204V} identifies ${\rm F204A}$ as the standard double cover of ${\rm F102A}$. \qed

Based on the fundamental theories and methodological techniques currently at our disposal, we can now proceed to prove our Theorem  \ref{result 1}.

{\it \textbf{Proof of Theorem \ref{result 1}:}}
Suppose that $\Sigma\cong {\rm K_{3,3}},\,{\rm F30A}$ or ${\rm F102A}$. Then by \cite[Theorem 1.1]{FK20042}, Lemmas \ref{F30A} and \ref{F102A}, one can get that Theorem \ref{result 1} (1), (3) or (4) hold, respectively. 
Assume next that $\Sigma\cong{\rm DC_{14}}$. \cite[Theorem 1.1]{WH2010} classifies all connected arc-transitive regular cyclic covers of the Heawood graph. Rewriting the Cayley graphs in that classification in the notation of Example \ref{def:DCfamily}, the first family is $\mathcal D(1,7n,\lambda)$, where $\lambda^2+\lambda+1\equiv0\pmod{7n}$. The second family occurs when $7\mid n$ and is $\mathcal D(7,n/7,\mu)$, where $\mu=0$ for $n=7$ and $\mu^2+\mu+1\equiv0\pmod{n/7}$ for $n>7$. Conversely, the graphs in these two families are connected arc-transitive regular $\mz_n$-covers of ${\rm DC_{14}}$ by \cite[Theorem 1.1]{WH2010} and Example \ref{def:DCfamily}. This proves Theorem \ref{result 1} (2).  \qed

\section{\texorpdfstring{$\mz_p^2$-Covers}{Zp2-Covers}}\label{sec4}
In this section, let $\Ga$ be a connected $X$-arc-transitive regular cover of the edge-primitive cubic graph $\Sigma$, where $X$ is the fibre-preserving group and the covering transformation group is $K\cong\mz_p^2$ for some prime $p$.  Since $X$ is the fibre-preserving group, we have $K\unlhd X$. Let $Y=X/K$.  
Then $Y\leqslant\Aut(\Sigma)$, $X=K.Y$ and $X_\a\cong Y_v$ for any $\a\in V(\Ga)$ and $v\in V(\Sigma)$.  
With $\mz_p^2$-covers of ${\rm K_{3,3}}$ and ${\rm DC_{14}}$ already understood in \cite[Theorem 1.1]{DXY2018} and \cite[Theorem 7.1]{CM20132}, we now turn to a detailed study of the remaining two graphs. We treat these cases in the following two lemmas.

\begin{lemma}\label{F30A2}
The graph $\Sigma$ is not isomorphic to ${\rm F30A}$.
\end{lemma}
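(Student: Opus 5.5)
We argue by contradiction. Suppose $\Sigma\cong{\rm F30A}$ and $K\cong\mz_p^2$. As in the proof of Lemma \ref{F30A}, $Y=X/K\leqslant\PGammaL(2,9)$ is arc-transitive on $\Sigma$, and vertex-transitivity excludes $\PSL(2,9)$ and $\Sy_6$. Hence $Y\in\{\PGL(2,9),\M_{10},\PGammaL(2,9)\}$, and in every case $Y$ contains a normal subgroup $T\cong\A_6$. Let $M$ be the full preimage of $T$ in $X$, so $M=K.\A_6$ and $M\unlhd X$. By Proposition \ref{NQ}(3), $X_\a\cong Y_v$ is $\Sy_4$ or $\Sy_4\times\mz_2$. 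Moreover $M_\a$ is a subgroup of $X_\a$ of index at most $4$, and it contains a copy of $\A_4$. Indeed, in each case $T_v\cong\A_4$ or $\Sy_4$, because $T$ has two orbits of length $15$ on $V(\Sigma)$, and $M_\a\cong T_v$ since $K$ is semiregular.

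The key step is to analyse the conjugation action of $M$ on $K$. Let $C=C_M(K)$. By the $N/C$-theorem, $M/C$ embeds in $\Aut(K)\cong\GL(2,p)$. Now $M/C$ is a quotient of $M/K\cong\A_6$, hence trivial or $\A_6$, and Proposition \ref{GL} excludes $\A_6$. So $C=M$ and $M$ is a central extension of $K$ by $\A_6$. As in Proposition \ref{simple}, $M=KM'$ and $M'=(K\cap M').\A_6$ is perfect, with $K\cap M'$ isomorphic to a subgroup of ${\rm Mult}(\A_6)\cong\mz_6$ by Proposition \ref{schur}. Since $K\cap M'$ is an elementary abelian $p$-group, $K\cap M'$ is $1$, $\mz_2$ (when $p=2$) or $\mz_3$ (when $p=3$). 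Thus $M'\in\{\A_6,\SL(2,9),\mz_3.\A_6\}$. Next, $M_\a/M'_\a$ embeds in $M/M'\cong K/(K\cap M')$, which is elementary abelian. Since $M_\a$ contains $\A_4$, whose largest elementary abelian quotient is $\mz_3$, we get $M'_\a\geqslant\mz_2^2$. This rules out $\SL(2,9)$, which has a unique involution. Hence $M'\cong\A_6$ or $\mz_3.\A_6$. In particular $M'$ is characteristic in $M$, so $M'\unlhd X$.

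To finish, count orbits. $M'$ is not semiregular since $M'_\a\neq1$, so Proposition \ref{NQ} forces $M'$ to have at most two orbits on $V(\Ga)$. Hence $|V(\Ga)|=30p^2\leqslant 2|M'|/|M'_\a|$. We have $|M'|\leqslant1080$, and $|M'_\a|\geqslant4$; in fact $|M'_\a|\geqslant12$ once we know $\A_4\leqslant M'_\a$. This last point holds because $M'_\a$ has index dividing $p$-power in $M_\a$, and $\A_4$ has no normal subgroup of $p$-power index other than through $\mz_3$ when $p=3$. This gives $30p^2\leqslant180$, so $p=2$ and $M'\cong\A_6$, or possibly $p\leqslant 2$ in the $\mz_3.\A_6$ case. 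Then $p=3$ is impossible, since $\mz_3.\A_6$ would need $K\cap M'=\mz_3$ but $270>2\cdot1080/|M'_\a|$ for $|M'_\a|\geqslant 12$. The case $p=2$ gives $|V(\Ga)|=120$, with $\Ga$ being $4$- or $5$-arc-transitive. The Foster census \cite{CP2026} contains no such cubic graph of order $120$ covering ${\rm F30A}$, which is a contradiction. Alternatively, the $p=2$ case can be killed directly: $M'\cong\A_6$ with two orbits of size $60$ requires $|M'_\a|=6$, contradicting $\A_4\leqslant M'_\a$ (up to the index-$p$ caveat).

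The main obstacle is controlling $M'_\a$ precisely enough in the small primes $p=2,3$, where $M_\a/M'_\a$ can be a nontrivial elementary abelian $p$-group. I would handle this by listing the possible $M_\a\in\{\A_4,\Sy_4,\A_4\times\mz_2,\Sy_4\times\mz_2\}$, noting which normal subgroups have elementary abelian $p$-quotient, and checking each against the orbit count. Where a numerical case survives, I would fall back on the Foster census at orders $120$ and $270$.
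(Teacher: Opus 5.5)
Your overall strategy works, but the orbit count in the last paragraph has a hole exactly where it matters: $p=3$ with $M'\cong\mz_3.\A_6$. You derive $|M'_\alpha|\geqslant12$ from two facts: $M_\alpha$ contains an $\A_4$, and $M_\alpha/M'_\alpha$ is an elementary abelian $p$-group. As you note yourself, this fails for $p=3$, because $\A_4$ has a quotient $\mz_3$. But $\mz_3.\A_6$ arises only when $K\cap M'\cong\mz_3$, that is, only when $p=3$. So your sentence ``$p=3$ is impossible since \ldots\ for $|M'_\alpha|\geqslant12$'' uses the bound in the one case where you have not justified it. With only $|M'_\alpha|\geqslant4$, the inequality reads $270\leqslant2\cdot1080/4=540$, and no contradiction follows. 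For the same reason, the remark about ``$p\leqslant2$ in the $\mz_3.\A_6$ case'' is vacuous.

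The repair uses something you already stated. $T$ has orbits of length $15$; in the $\PGammaL(2,9)$ case this needs one line: the $T$-orbits are blocks of $Y$ inside the parts, and $\A_6$ has no subgroups of index $3$ or $5$. Hence $|T_v|=24$ and $M_\alpha\cong T_v\cong\Sy_4$, not merely ``$\A_4$ or $\Sy_4$''. Since $\Sy_4/\Sy_4'\cong\mz_2$, the elementary abelian $p$-group $M_\alpha/M'_\alpha$ is trivial for odd $p$ and at most $\mz_2$ for $p=2$. Therefore $M'_\alpha\in\{\A_4,\Sy_4\}$ for every $p$. Two $M'$-orbits then cover at most $2\cdot360/12=60<30p^2$ vertices if $M'\cong\A_6$, and at most $2\cdot1080/12=180<270$ if $M'\cong\mz_3.\A_6$. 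So the case $p=2$ does not survive either: there $M'\cong\A_6$, and the relevant bound is $60$, not $180$. Neither the census at order $120$ nor your alternative argument is needed.

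Once repaired, your route genuinely differs from the paper's. The paper treats $Y\cong\PGL(2,9),\M_{10}$ via $(X^+)'$, and $Y\cong\PGammaL(2,9)$ via a preimage of $\A_6$ in $C_{X^+}(K)$. In the second case it must prove $M'=(X^+)''$ to get normality in $X$, and it rules out $|V(\Gamma)|=270$ with the Foster census. Taking $M$ to be the preimage of $\mathrm{soc}(Y)\cong\A_6$ handles all three $Y$ at once and gives $M\unlhd X$ for free. It also yields a proof that does not depend on the census.
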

\pf\, Suppose that $\Sigma\cong {\rm F30A}$, which is bipartite.
Let $Y^+$ be the subgroup of $Y$ that fixes each part of the bipartition of $V(\Sigma)$, and $X^+=K.Y^+$. 
Since $\Ga$ is $X$-arc-transitive, we get that$\Sigma$ is $Y$-arc-transitive, which means that $Y^+$ acts transitively on each partition. 
It follows that $|X:X^+|=|Y:Y^+|=2$, $X_\a=X^+_\a$ for $\a\in V(\Ga)$, and $Y_v=Y^+_v$ for $v\in V(\Sigma)$. 
With similar arguments as Lemma \ref{F30A}, we see that $\Ga$ is an $X$-vertex-transitive locally-primitive graph and $Y\in\{\PGL(2, 9),\,\M_{10},\,\PGammaL(2, 9)\}$. Next, we deal it with the following two steps.

{\it Step 1: exclude $Y\cong\PGL(2, 9)$ or $\M_{10}$.}

Assume that $Y\cong\PGL(2, 9)$ or $\M_{10}$. Since $|Y|=720$ and $Y$ is transitive on $V(\Sigma)$, we have $|Y_v|=24$. By \cite{DM1980}, $\Sigma$ is $(Y,4)$-arc-regular and $Y_v\cong\Sy_4$. Moreover, \cite[Page 4]{C1985} gives $Y^+\cong\A_6$. Thus $Y^+_v=Y_v\cong X_\a\cong\Sy_4$ and $X^+=K.Y^+\cong K.\A_6$.

Let $C=C_{X^+}(K)$. By the $``N/C"$-theorem, $Y^+/(C/K)\cong (X^+/K)/(C/K)\cong X^+/C \lesssim\Aut(K)$. 
Since $K\cong\mz_p^2$, we obtain that $\Aut(K)\cong\GL(2, p)$.
Applying Proposition \ref{GL}, one yields that $C/K=Y^+\cong\A_6$.
Hence, $C=X^+\cong K.\A_6$ and so $K\leqslant Z(X^+)$. Since $Y^+\cong\A_6$ is a perfect group, we get that  $(X^+)'K/K\cong (X^+/K)'\cong \A_6\cong X^+/K$, and so $X^+=K(X^+)'$. Thus, it follows from $K\leqslant Z(X^+)$ that $(X^+)'=[X^+,X^+]=[K(X^+)',K(X^+)']=[(X^+)',(X^+)']=(X^+)''$ and $(X^+)'/(K\cap (X^+)')\cong (X^+)'K/K=X^+/K\cong\A_6$. Hence, $(X^+)'=(K\cap (X^+)').\A_6$ is a proper central extension. 
By Proposition \ref{schur}, we deduce that $K\cap (X^+)'\lesssim {\rm Mult}(\A_6)\cong\mz_6$. 
Noting that $K\cong\mz_p^2$, we get $K\cap (X^+)'\cong\mz_k$ for some $k\in\{1,\,2,\,3\}$. 
Hence, $(X^+)'\in\{\A_6,\,\SL(2, 9),\,\mz_3.\A_6\}$. 

Since $(X^+)'\unlhd X^+$, we have that $(X^+)'_\a\unlhd X^+_\a\cong\Sy_4$.
Noting that $X^+_\a/(X^+)'_\a=X^+_\a/(X^+_\a\cap (X^+)')\cong X^+_\a(X^+)'/(X^+)'\leqslant X^+/(X^+)'=K(X^+)'/(X^+)'\cong K/(K\cap (X^+)')$ is abelian as $K\cong\mz_p^2$, we deduce that $(X^+)'_\a\cong\A_4$ or $\Sy_4$. 
Noting that $\SL(2, 9)$ contains only one involution (see \cite[Page 394 6.3(ii)]{S1982} for example), one yields that it cannot contain any subgroup isomorphic to $\A_4$ or $\Sy_4$. Therefore, $(X^+)'\in\{\A_6,\,\mz_3.\A_6\}$. 
Noting that $(X^+)'\char X^+\unlhd X$, we get $(X^+)'\unlhd X$. 
Thus, all orbits of $(X^+)'$ on $V(\Ga)$ have the same length. 
Noting that $(X^+)'$ is not transitive on $V(\Ga)$ and $(X^+)'_\a\neq1$, by Proposition \ref{NQ}, we get that $(X^+)'$ has exactly two orbits. 
Since $K\cong\mz_p^2$, this yields $|K|=p^2\geqslant4$.
Suppose that $(X^+)'\cong\A_6$. 
Then $|\a^{(X^+)'}|=|\A_6:(X^+)'_\a|=30/m$ where $m=1$ or 2, depending on $(X^+)'_\a\cong\A_4$ or $\Sy_4$, and so the number of the orbits of $(X^+)'$ is $2=|V(\Ga)|/|\a^{(X^+)'}|=30|K|/(30/m)= m|K|$. Thus, $|K|=1$ or $2$, a contradiction. Suppose that $(X^+)'\cong \mz_3.\A_6$. 
Then $|\mz_3.\A_6:(X^+)'_\a|=90/m$, where $m\in\{1,2\}$. The subgroup $(X^+)'$ has exactly two orbits on $V(\Ga)$, so $2=30|K|/(90/m)=m|K|/3$. It follows that $|K|=6/m\in\{3,6\}$, whereas $K\cong\mz_p^2$ has order $p^2$. This is impossible.

{\it Step 2: exclude $Y\cong\PGammaL(2,9)$.}

Since ${\rm F30A}$ is $(\PGammaL(2,9),5)$-transitive, by \cite[Proposition 5]{DM1980}, we obtain that $Y_v\cong\Sy_4\times\mz_2$, and hence $Y^+_v=Y_v\cong X_\a=X^+_\a\cong\Sy_4\times\mz_2$, where $v\in V(\Sigma)$ and $\a\in V(\Ga)$. 
By Proposition \ref{NQ} (2), the graph $\Ga$ is $(X,5)$-arc-transitive. We deduce from $\Sigma$ is bipartite and $Y\cong\PGammaL(2,9)$ that $|Y:Y^+|=2$ and $Y^+\in\{\Sy_6,\,\PGL(2,9),\,\M_{10}\}$. 
Since $|V(\Sigma)|=30$, the equality $|Y^+|=|v^{Y^+}||Y^+_v|=15|Y^+_v|$ shows that $Y^+_v$ has index 15 in $Y^+$. Neither $\PGL(2,9)$ nor $\M_{10}$ contains a subgroup of index 15 (see \cite[Page 4]{C1985} for example); thus $Y^+\cong\Sy_6$ and so $X^+=K.Y^+\cong K.\Sy_6$. 
Let $C=C_{X^+}(K)$. The $``N/C"$-theorem gives $\Sy_6/(C/K)\cong (X^+/K)/(C/K)\cong X^+/C\lesssim\Aut(K)$. It follows from $K\cong\mz_p^2$ that $\Aut(K)\cong\GL(2,p)$. Quoting with the Proposition \ref{GL}, we yield that $C/K\cong\A_6$ or $\Sy_6$, and $|X^+:C|\leqslant 2$.

Let $M\leqslant C$ with $K\leqslant M$ and $M/K\cong\A_6$. The possibilities $C/K\cong\A_6$ or $\Sy_6$, together with $|X^+:C|\leqslant2$, give $|X^+:M|=2$. The group $Y^+\cong\Sy_6$ is transitive on each part of the bipartition of $\Sigma$, while $M/K\cong\A_6$ is normal of index $2$ in $Y^+$. Hence $M/K$ has at most two orbits on either part. These orbits have the same length, whereas each part has $15$ vertices, so $M/K$ is transitive on each part of $\Sigma$. The group $K$ acts regularly on every fibre, which gives $|\a^M|=|\a^{X^+}|=15|K|$. Together with $|\a^M|=|\a^{X^+}|$ and $M_\a=M\cap X^+_\a$, the equality $|X^+:M|=2$ gives $|X^+:M|=\frac{|X^+|}{|M|}=\frac{|\a^{X^+}||X^+_\a|}{|\a^M||M_\a|}=|X^+_\a:M_\a|=2$.
Since $X^+_\a\cong\Sy_4\times\mz_2$, the subgroup $M_\a$ is isomorphic to either $\Sy_4$ or $\A_4\times\mz_2$. In particular, $M_\a$ contains a subgroup isomorphic to $\A_4$.

The inclusion $K\leqslant Z(C)$ gives $K\leqslant Z(M)$. The quotient $M/K\cong\A_6$ is perfect, so $M'K/K=(M/K)'=M/K$ and $M=KM'$. Hence $M'=[KM',KM']=[M',M']=M''$, while $M'/(K\cap M')\cong\A_6$ and $K\cap M'\leqslant Z(M')$. The relations $M'=M''$, $K\cap M'\leqslant Z(M')$ and $M'/(K\cap M')\cong\A_6$ imply that $K\cap M'$ is isomorphic to a quotient of ${\rm Mult}(\A_6)$. By Proposition \ref{schur}, ${\rm Mult}(\A_6)\cong\mz_6$. As $K\cong\mz_p^2$, it follows that $K\cap M'\cong1$, $\mz_2$ or $\mz_3$, and hence $M'\in\{\A_6,\mz_2.\A_6,\mz_3.\A_6\}$.

Now $M'_\a=M'\cap M_\a\unlhd M_\a$, and $M_\a/M'_\a$ is abelian because it embeds in $M/M'$. Hence $M'_\a\in\{\mz_2^2,\mz_2^3,\A_4,\A_4\times\mz_2,\Sy_4\}$, so $M'_\a$ contains a subgroup isomorphic to $\mz_2^2$. By \cite[Page 394, 6.3 (ii)]{S1982}, $2.\A_6\cong\SL(2,9)$ has a unique involution and contains no subgroup isomorphic to $\mz_2^2$. Hence $M'\in\{\A_6,\mz_3.\A_6\}$.

The equality $|X^+:M|=2$ gives $(X^+)'\leqslant M$, so $(X^+)''\leqslant M'$. The relation $X^+/K\cong\Sy_6$ gives $(X^+)''K/K=(X^+/K)''\cong\A_6$. Thus $(X^+)''$ has a quotient isomorphic to $\A_6$ and is unsolvable. Also, $(X^+)''\char X^+\unlhd X$, which gives $(X^+)''\unlhd M'$. If $M'\cong\A_6$, the simplicity of $\A_6$ gives $(X^+)''=M'$. Suppose that $M'\cong\mz_3.\A_6$, and let $Z=Z(M')\cong\mz_3$. The subgroup $(X^+)''Z/Z$ is normal in $M'/Z\cong\A_6$. It is nontrivial because $(X^+)''$ is unsolvable, so $(X^+)''Z=M'$. Hence $M'/(X^+)''$ is a quotient of $Z$ and is abelian. The group $M'$ is perfect, so this quotient is also perfect and must be trivial. Thus $(X^+)''=M'$ in both cases. Hence $M'=(X^+)''\unlhd X$, and all $M'$-orbits on $V(\Ga)$ have the same length. The subgroup $M'$ is not transitive and $M'_\a\neq1$, so Proposition \ref{NQ} shows that $M'$ has exactly two orbits on $V(\Ga)$. 
Suppose that $M'\cong\mz_3.\A_6$. Then $M'\cap K\cong\mz_3$, so $|K|=9$ and $|V(\Ga)|=270$. The Foster census \cite{CP2026} contains no arc-transitive cubic graph of order $270$, a contradiction. Hence $M'\cong\A_6$. Since $\A_6$ contains no subgroup isomorphic to $\mz_2^3$, we have $M'_\a\in\{\mz_2^2,\A_4,\Sy_4\}$ and $|\a^{M'}|=360/|M'_\a|$. The two $M'$-orbits give $2=30|K|/(360/|M'_\a|)=|K||M'_\a|/12$. For $|M'_\a|\in\{4,12,24\}$ this yields $|K|\in\{6,2,1\}$, impossible because $|K|=p^2$. This proves Lemma \ref{F30A2}. \qed

\begin{lemma}\label{F102A2}
The graph $\Sigma$ is not isomorphic to ${\rm F102A}$.
\end{lemma}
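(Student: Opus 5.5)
We argue by contradiction, following the strategy of Lemma \ref{F102A} together with the $\mz_p^2$-specific tools used in Lemma \ref{F30A2}. Suppose $\Sigma\cong{\rm F102A}$. As in Lemma \ref{F102A}, $\Ga$ is $X$-vertex-transitive and locally primitive. Moreover $\Aut(\Sigma)\cong\PSL(2,17)$, $306\mid|Y|$, and the maximal subgroups of $\PSL(2,17)$ force $Y\cong\PSL(2,17)$. Hence $\Sigma$ is $(Y,4)$-arc-regular and $X_\a\cong Y_v\cong\Sy_4$, and by Proposition \ref{NQ} the graph $\Ga$ is $(X,4)$-arc-regular with $X=K.\PSL(2,17)$ and $K\cong\mz_p^2$.

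First we show that $K$ is central. Put $C=C_X(K)$. By the ``$N/C$'' theorem, $X/C\cong Y/(C/K)$ embeds in $\Aut(K)\cong\GL(2,p)$. Since $Y$ is simple, $C/K$ is either $1$ or $Y$. If $C/K=1$, then $\GL(2,p)$ would contain a subgroup isomorphic to $\PSL(2,17)$, which Proposition \ref{GL} forbids. Hence $C=X$ and $K\leqslant Z(X)$. Perfectness of $Y$ then gives $X=KX'$ and $X'=X''$, exactly as in Step 1 of Lemma \ref{F30A2}. Thus $X'=(K\cap X').\PSL(2,17)$ is a central extension of a perfect group, so $K\cap X'$ is a quotient of ${\rm Mult}(\PSL(2,17))\cong\mz_2$ by Proposition \ref{schur}. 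Consequently $X'\cong\PSL(2,17)$ or $\SL(2,17)$.

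Next we locate the point stabiliser in $X'$. Since $X_\a/X'_\a$ embeds in $X/X'\cong K/(K\cap X')$, which is abelian, and $X_\a\cong\Sy_4$, we get $X'_\a\cong\A_4$ or $\Sy_4$. The group $\SL(2,17)$ has a unique involution and so contains no $\A_4$; therefore $X'\cong\PSL(2,17)$. Now $X'$ is characteristic in $X$, nontransitive, and satisfies $X'_\a\neq1$, so Proposition \ref{NQ} shows that $X'$ has at most two orbits on $V(\Ga)$. Each orbit has length $|\PSL(2,17)|/(12m)=204/m$ with $m\in\{1,2\}$, so the number of orbits is $102p^2m/204=mp^2/2\leqslant2$. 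This gives $p^2\leqslant4$, hence $p=2$, $|K|=4$ and $|V(\Ga)|=408$.

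The main obstacle is excluding this last case, $p=2$. We would invoke the Foster census \cite{CP2026}, exactly as in Lemma \ref{F102A}: it contains no $4$-arc-transitive cubic graph of order $408$, which gives the contradiction. As an alternative check that avoids the census, note that $X=K\times X'$ with $X'\cong\PSL(2,17)$ and $X'$ having two orbits, so $m=1$. Then $X'_\a\cong\A_4$ and $X_\a\cong\Sy_4$, and $X_\a$ must project onto a $\mz_2$ quotient of $K$. One could try to derive a contradiction from this directly using the bipartite double structure, but the census citation is the route we would take.
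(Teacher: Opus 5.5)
Your proposal is correct and follows the paper's proof essentially step for step. You make $K$ central via the $N/C$ theorem and Proposition~\ref{GL}, get $X'\cong\PSL(2,17)$ from the Schur multiplier and the unique involution of $\SL(2,17)$, use the orbit count $mp^2/2\leqslant 2$ to force $|K|=4$, and rule out order $408$ with the Foster census.

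You do not need the non-transitivity of $X'$, since Proposition~\ref{NQ} with $X'_\a\neq1$ already bounds the number of orbits by two. Your census-free alternative can in fact be completed: when $X'_\a\cong\A_4$, the subgroup $X'_{\a\b}$ equals both the kernel of $X_\a$ on $N(\a)$ and, symmetrically, the kernel of $X_\b$ on $N(\b)$, so connectivity forces this Klein four-group to be trivial, a contradiction.
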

\pf\, Suppose that $\Sigma\cong {\rm F102A}$. 
Arguing as in Lemma \ref{F102A}, we find that $\Ga$ is an $X$-vertex-transitive locally-primitive graph, $Y\cong\PSL(2,17)$ and $X=K.\PSL(2, 17)$ with $K\cong\mz_p^2$. 
Recall that $\Sigma\cong{\rm F102A}$ is the Biggs--Smith graph, which is $4$-arc-regular and has full automorphism group $\PSL(2,17)$; see \cite[Web]{CP2026}. Hence $Y_v\cong X_\a\cong\Sy_4$ for $v\in V(\Sigma)$ and $\a\in V(\Ga)$. Proposition \ref{NQ} (2) gives that $\Ga$ is $(X,4)$-arc-transitive.

Let $C=C_X(K)$. Then $K\leqslant Z(C)$. The $N/C$-theorem gives $X/C\lesssim\Aut(K)\cong\GL(2,p)$. Since $X/K\cong\PSL(2,17)$ is simple, the quotient $X/C$ is either trivial or isomorphic to $\PSL(2,17)$. Proposition \ref{GL} excludes the latter, so $C=X$. Since $X/K$ is perfect, $X'K/K=(X/K)'=X/K$, and hence $X=X'K$. The inclusion $K\leqslant Z(X)$ gives $K\cap X'\leqslant Z(X')$. Also, $X'=[X'K,X'K]=[X',X']=X''$, so $X'$ is perfect. Proposition \ref{schur} shows that $|K\cap X'|$ divides $|{\rm Mult}(\PSL(2,17))|=2$. Hence $X'\cong\PSL(2,17)$ or $\SL(2,17)$.

Since $X'\unlhd X$, we have $X'_\a\unlhd X_\a\cong\Sy_4$. Moreover, $X_\a/X'_\a\cong X_\a X'/X'\leqslant X/X'\cong K/(K\cap X')$ is abelian, so $X'_\a\cong\A_4$ or $\Sy_4$. Proposition \ref{NQ} shows that $X'$ has at most two orbits on $V(\Ga)$. The group $\SL(2,17)$ has a unique involution by \cite[Page~394, 6.3 (ii)]{S1982}, so it contains no subgroup isomorphic to $\A_4$ or $\Sy_4$. Thus $X'\cong\PSL(2,17)$ and $K\cap X'=1$, giving $X=K\times X'\cong K\times\PSL(2,17)$.

Now $|\a^{X'}|=|X':X'_\a|=204/m$, where $m=1$ or $2$, and hence $X'$ has $m|K|/2$ orbits on $V(\Ga)$. Since this number is at most two and $|K|=p^2$, we obtain $|K|=4$. Thus $|V(\Ga)|=408$, but the Foster census \cite{CP2026} contains no $4$-arc-transitive cubic graph of order $408$, a contradiction. This proves Lemma \ref{F102A2}.\qed

Based on the fundamental theories and methodological techniques currently at our disposal, we can now proceed to prove our Theorem \ref{result 2}.

{\it \textbf{Proof of Theorem \ref{result 2}:}}

Let $\Sigma$ be an edge-primitive cubic graph and let $K\cong\mz_p^2$. By \cite[Lemma~2.9]{W1973}, $\Sigma\cong{\rm K_{3,3}}$, ${\rm DC_{14}}$, ${\rm F30A}$ or ${\rm F102A}$. Lemmas \ref{F30A2} and \ref{F102A2} exclude the last two possibilities. It remains to consider ${\rm K_{3,3}}$ and ${\rm DC_{14}}$.

(I). First suppose that $\Sigma\cong{\rm K_{3,3}}$. If $p\in\{2,3\}$, then $|V(\Ga)|=6p^2$, and \cite[Web]{CP2026} gives $\Ga\cong{\rm F24A}$ for $p=2$ and $\Ga\cong{\rm F54A}$ for $p=3$. Now assume that $p\geqslant5$. The $Y$-arc-transitivity of $\Sigma$ gives $18\mid|Y|$. We can label the vertices in $\Sigma$ as $\{1,\,2,\,\dots,\,6\}$. Let us define the following permutations of this vertex set: $g_1=(1,3,5)$, $g_2=(2,4,6)$, $g_3=(1,6)(2,3)(4,5)$, $g_4=(1,6)(2,5,4,3)$ and $g_5=(2,4)(3,5)$. Set $Y_1=\l g_1,\,g_2,\,g_3\r$, $Y_2=\l g_1,\,g_2,\,g_4,\,g_5\r$, $Y_3=\l g_1,\,g_2,\,g_3,\,g_5\r$ and $Y_4=\Aut(\Sigma)\cong\Sy_3\wr\mz_2$. The generators show that $Y_1\cong\mz_3^2\rtimes\mz_2$ is $1$-arc-regular on $\Sigma$, while $Y_2\cong\mz_3^2\rtimes\mz_4$ and $Y_3\cong\Sy_3\times\Sy_3$ are $2$-arc-regular, and $Y_4$ is $2$-arc-transitive.

It remains to show that the above four groups exhaust all arc-transitive possibilities for $Y$. A Magma computation enumerating the conjugacy classes of subgroups of $\Aut(\Sigma)$ whose orders are divisible by $18$ shows that there are precisely four conjugacy classes of arc-transitive subgroups. Their representatives have orders $18,36,36$ and $72$, respectively, and are conjugate in $\Aut(\Sigma)$ to $Y_1,Y_2,Y_3$ and $Y_4$, respectively. Hence $Y_1,Y_2,Y_3$ and $Y_4$ exhaust all arc-transitive possibilities for $Y$ up to conjugacy in $\Aut(\Sigma)$; see Appendix \ref{app:K33}. After conjugating $Y$ in $\Aut(\Sigma)$ if necessary, we may assume that $Y=Y_i$ for some $i\in\{1,2,3,4\}$.

If $Y=Y_i$ for some $i\in\{2,3,4\}$, then $\Sigma$ is $(Y,2)$-arc-transitive. By Proposition \ref{NQ} (2), $\Ga$ is $(X,2)$-arc-transitive. Hence \cite[Theorem 1.1]{DXY2018} and Example \ref{cover1} give $\Ga\cong X(3)$.
It remains to consider $Y=Y_1$. In this case $\Sigma$ is $Y$-arc-regular. By Proposition \ref{NQ} (2), $\Ga$ is $X$-arc-transitive. Since $|X|=|K||Y|=18p^2$ equals the number of arcs of $\Ga$, the action of $X$ on the arcs is regular. Choose the spanning tree of $\Sigma$ shown in Figure \ref{K_{3,3}} and normalize the voltage assignment so that every tree arc has voltage $0$. Let $x_1,x_2,x_3,x_4$ be the voltages on the four cotree arcs indicated in Figure \ref{K_{3,3}}. Since $\Ga$ is connected, \cite[Page 46, Exercise 2.2.4]{KN2007} gives $K=\langle x_1,x_2,x_3,x_4\rangle$. For $p\geqslant5$, the discussion in the second paragraph of \cite[Page~78]{KN2007} shows that $\Ga$ is isomorphic to one of the graphs defined by the voltage assignments in rows 3 and 4 of \cite[Table 2.1]{KN2007}. Hence $\Ga\cong {\rm K_{3,3}}\times_{\ell^1}\mz_p^2$ or ${\rm K_{3,3}}\times_{\ell^2}\mz_p^2$, as defined in Example \ref{cover2}.

(II). Now suppose that $\Sigma\cong{\rm DC_{14}}$. By \cite[Theorem~7.1]{CM20132}, for each prime $p\neq7$, the graph $\mathcal{H}_p$ in Example \ref{def:Hp} is the unique arc-transitive regular $\mz_p^2$-cover of ${\rm DC_{14}}$, and it is $1$-arc-regular. Hence $\Aut(\mathcal{H}_p)\cong\mz_p^2.(\mz_7\rtimes\mz_6)$. In particular, $\mathcal{H}_2\cong{\rm F56A}$. It remains to consider $p=7$. By \cite[Theorem~7.1 (f)]{CM20132}, there are exactly two non-isomorphic arc-transitive regular $\mz_7^2$-covers of ${\rm DC_{14}}$: the $2$-arc-regular graph $\mathcal{H}_7\cong{\rm F686B}$ and the $1$-arc-regular graph ${\rm F686C}$. The same theorem also states that the latter admits a cyclic covering group of order $49$. Thus the second possibility in Theorem \ref{result 2} (2) is precisely ${\rm F686C}$. This proves Theorem \ref{result 2} (2).   \qed

\section{Further research}

Together with the known results for ${\rm K_{3,3}}$ and ${\rm DC_{14}}$, Theorems \ref{result 1} and \ref{result 2} give the classification, up to isomorphism of the covering graphs, of arc-transitive regular covers of edge-primitive cubic graphs with cyclic or $\mz_p^2$ covering transformation groups. This leads to the following problem.

\begin{problem}
Classify the arc-transitive regular covers of edge-primitive cubic graphs whose covering transformation groups are abelian or metacyclic.
\end{problem}

\appendix

\section{Arc-transitive subgroups of $\Aut(\K_{3,3})$}\label{app:K33}

The following Magma computation verifies that, up to conjugacy in $\Aut(\K_{3,3})$, there are precisely four arc-transitive subgroups, represented by $Y_1,Y_2,Y_3$ and $Y_4$.

\begin{lstlisting}[basicstyle=\ttfamily\small,breaklines=true,columns=fullflexible,caption={Arc-transitive subgroups of $\Aut(\K_{3,3})$.},label={code:K33}]
S6 := SymmetricGroup(6);

g1 := S6!(1,3,5);
g2 := S6!(2,4,6);
g3 := S6!(1,6)(2,3)(4,5);
g4 := S6!(1,6)(2,5,4,3);
g5 := S6!(2,4)(3,5);

Y1 := sub< S6 | g1, g2, g3 >;
Y2 := sub< S6 | g1, g2, g4, g5 >;
Y3 := sub< S6 | g1, g2, g3, g5 >;

a1  := S6!(1,3,5);
b1  := S6!(1,3);
a2  := S6!(2,4,6);
b2  := S6!(2,4);
tau := S6!(1,2)(3,4)(5,6);

A := sub< S6 | a1, b1, a2, b2, tau >;
Y4 := A;

Order(A);
[ Order(Y) : Y in [Y1,Y2,Y3,Y4] ];

IsArcTransitive := function(H)
    if #Orbit(H,1) ne 6 then
        return false;
    end if;
    H1 := Stabilizer(H,1);
    return #Orbit(H1,2) eq 3;
end function;

[ IsArcTransitive(Y) : Y in [Y1,Y2,Y3,Y4] ];

C := Subgroups(A : OrderMultipleOf := 18);

ArcSubs := [];
for R in C do
    H := R`subgroup;
    if IsArcTransitive(H) then
        Append(~ArcSubs,H);
    end if;
end for;

#ArcSubs;
[ Order(H) : H in ArcSubs ];

Ys := [Y1,Y2,Y3,Y4];

for H in ArcSubs do
    matches := [];
    for i in [1..4] do
        if IsConjugate(A,H,Ys[i]) then
            Append(~matches,i);
        end if;
    end for;
    <Order(H),matches>;
end for;

/* Output:
72
[ 18, 36, 36, 72 ]
[ true, true, true, true ]
4
[ 18, 36, 36, 72 ]
<18, [ 1 ]>
<36, [ 2 ]>
<36, [ 3 ]>
<72, [ 4 ]>
*/
\end{lstlisting}

\noindent{\bf Declaration of competing interest}

The authors declare that they have no known competing financial interests or personal relationships that could have appeared to influence the work reported in this paper.

\end{document}